\documentclass[oneside,english]{amsart}
\usepackage[T1]{fontenc}
\usepackage[latin9]{inputenc}
\setlength{\parskip}{\medskipamount}
\setlength{\parindent}{0pt}
\usepackage{units}
\usepackage{mathrsfs}
\usepackage{amsthm}
\usepackage{amstext}
\usepackage{amssymb}
\usepackage{esint}

\makeatletter
\numberwithin{equation}{section}
\numberwithin{figure}{section}
\usepackage{enumitem}		
\usepackage{color}

\def\@begintheorem#1#2[#3]{%
\deferred@thm@head{\the\thm@headfont \thm@indent
\@ifempty{#1}{\let\thmname\@gobble}{\let\thmname\@iden}%
\@ifempty{#2}{\let\thmnumber\@gobble}{\let\thmnumber\@iden}%
\@ifempty{#3}{\let\thmnote\@gobble}{\let\thmnote\@iden}%
\thm@swap\swappedhead\thmhead{#1}{#2}{#3}%
\the\thm@headpunct
\thmheadnl 
\hskip\thm@headsep
}%
\ignorespaces
{
\@ifempty{#3}{\addcontentsline{toc}{subsection}{#1 #2}}
{\addcontentsline{toc}{subsection}{#1 #2 (#3)}}
}
}
\definecolor{thmcolor}{rgb}{0,0.00,0.5}

\newtheoremstyle{thmsty}%
{3pt}
{6pt}
{}
{}
{\color{thmcolor}\sc\bfseries}
{.}
{.5em}
{}

\theoremstyle{thmsty}

\newtheorem{thm}[subsection]{Theorem}
\definecolor{defcolor}{rgb}{0.75,0.25,0}

\newtheoremstyle{defsty}%
{3pt}
{6pt}
{}
{}
{\color{defcolor}\sc\bfseries}
{.}
{.5em}
{}

\theoremstyle{defsty}

\newtheorem{defn}[subsection]{Definition}
\definecolor{propcolor}{rgb}{0,0.25,0.75}

\newtheoremstyle{propsty}%
{3pt}
{6pt}
{}
{}
{\color{propcolor}\sc\bfseries}
{.}
{.5em}
{}

\theoremstyle{propsty}

\newtheorem{prop}[subsection]{Proposition}
\usepackage{bbding}
\renewenvironment{proof}[1][\proofname]{\par
\pushQED{\qed}
\normalfont \topsep6\p@\@plus6\p@\relax
\trivlist
\item[\hskip\labelsep
\textbf{\emph
{#1\@addpunct{.}}}]\ignorespaces
}{\popQED\endtrivlist\@endpefalse}

\definecolor{corcolor}{rgb}{0,0.5,0.75}

\newtheoremstyle{corsty}%
{3pt}
{6pt}
{}
{}
{\color{corcolor}\sc\bfseries}
{.}
{.5em}
{}

\theoremstyle{corsty}

\newtheorem{cor}[subsection]{Corollary}

\makeatother

\usepackage{babel}

\begin{document}
\global\long\def\v#1{\vec{\mathbf{#1}}}
\global\long\def\norm#1{\left\Vert #1\right\Vert }
\global\long\def\P{\mathcal{Q}}
\global\long\def\Sch{\mathscr{S}}

\title[Wavefront sets of convolutions with (weighted) line integrals]{Wavefront sets of convolutions of distributions with (weighted) line
integral distributions}

\author{Brian Sherson}

\maketitle
Excluding pathological cases of curves $\v{\gamma}\in\mathcal{C}^{\infty}\left(\left(-\varepsilon,\infty\right),\mathbb{R}^{n}\right)$,
for some $\varepsilon>0$, the following line integral:
\[
\left\langle \mathcal{T}_{\v{\gamma}},\phi\right\rangle =\int_{0}^{\infty}\phi\left(\v{\gamma}\left(t\right)\right)\norm{\v{\gamma}'\left(t\right)}\, dt,\quad\phi\in\Sch\left(\mathbb{R}^{n}\right),
\]
defines a distribution. Moreover, if we replace $\norm{\v{\gamma}'\left(t\right)}$
with any bounded positive weight function $\upsilon\in\mathcal{C}^{\infty}\left(\left(-\varepsilon,\infty\right)\right)$
for some $\varepsilon>0$, the following also defines a distribution:
\[
\left\langle \mathcal{T}_{\v{\gamma},\upsilon},\phi\right\rangle =\int_{0}^{\infty}\phi\left(\v{\gamma}\left(t\right)\right)\upsilon\left(t\right)\, dt,\quad\phi\in\Sch\left(\mathbb{R}^{n}\right),
\]
provided that either $\norm{\v{\gamma}'\left(t\right)}$ is bounded
away from zero, or $\upsilon$ decays sufficiently fast so that the
above integral converges for any $\phi\in\Sch\left(\mathbb{R}^{n}\right)$.

The convolution of distributions $u_{1}$ and $u_{2}$, once of which
has compact support, is defined in Hörmander\cite{hormander1990AnaLinParDifOpeIDisTheFouAna},
as the unique distrubtion $u=u_{1}\star u_{2}$ satisfying:
\[
u_{1}\star\left(u_{2}\star\phi\right)=u\star\phi,\quad\phi\in\mathcal{C}_{0}^{\infty}\left(\mathbb{R}^{n}\right).
\]

As such, $w\star\mathcal{T}_{\gamma}$ and $w\star\mathcal{T}_{\gamma,\upsilon}$
are well-defined as distributions whenever $w\in\mathcal{E}'\left(\mathbb{R}^{n}\right)$.
This will give rise to formally defining the notations 
\begin{flalign*}
 & \hfill & w\star\mathcal{T}_{\v{\gamma}}\left(\v x\right) & =\int_{0}^{\infty}w\left(\v x-\v{\gamma}\left(t\right)\right)\norm{\gamma'\left(t\right)}\, dt, & \hfill\\
 &  & w\star\mathcal{T}_{\v{\gamma},\upsilon}\left(\v x\right) & =\int_{0}^{\infty}w\left(\v x-\v{\gamma}\left(t\right)\right)\upsilon\left(t\right)\, dt,
\end{flalign*}
both of which will agree with the usual notion of an integral converging
for almost every $\v x\in\mathbb{R}^{n}$ whenever $w\in\mathcal{L}^{1}\left(\mathbb{R}^{n}\right)$.
We will explore such convolutions, as well as their wavefront sets,
particularly exploring how the convolution scatters the singularities
of $w$. However, we will require a more direct formulation of such
convolutions than the definition of convolution given in Hörmander
provides for.

To avoid pathological cases, we will focus on choices of $w$ and
$\v{\gamma}$ for which given any $\v x$, $\v x-\v{\gamma}\left(t\right)$
lies outside the support of $w$ for $t$ sufficiently large.

\section{The Distributional Directional Antiderivative}

We may begin by extending the idea of directional antiderivatives
to compactly-supported distributions. In particular, given $\v v\in\mathcal{S}^{n-1}$,
we want to focus on the directional antiderivatives of the form:
\[
\mathcal{I}_{\v v}f\left(\v u+s\v v\right)=\int_{-\infty}^{s}f\left(\v u+t\v v\right)\, dt,\quad\v u\in\v v^{\perp},s\in\mathbb{R}.
\]
This is equivalent to choosing $\v{\gamma}\left(t\right)=-t\v v$.
When defining $\mathcal{I}_{\v v}w$ for a distribution $w$, we will
replace the requirement that $w$ be compactly-supported with a weaker
condition.
\begin{defn}[Distributional directional antiderivative]
\label{defn:dist-anti-partial-deriv}Let $\v v\in\mathcal{S}^{n-1}$,
$w\in\mathscr{D}'\left(\mathbb{R}^{n}\right)$, and suppose that $w$
satisfies a support condition
\begin{equation}
t_{\min}=\min_{\v x\in\mathrm{supp}\, w}\v x\cdot\v v>-\infty.\label{eq:support-condition}
\end{equation}
Now choose a $\psi_{0}\in\mathcal{C}_{0}^{\infty}\left(\mathbb{R}\right)$
with $\int_{\mathbb{R}}\psi_{0}\, dx=1$, and $\mathrm{supp}\,\psi_{0}\subseteq\left(-\infty,t_{\min}\right)$.
For $\phi\in\mathscr{S}\left(\mathbb{R}^{n}\right)$, we define
\[
\mathcal{I}_{\v v}\phi\left(\v u+t\v v\right)=\int_{-\infty}^{t}\phi\left(\v u+s\v v\right)-\mathcal{X}_{\v v}\phi\otimes\psi_{0}\left(\v u+s\v v\right)\, ds,\quad\v u\in\v v^{\perp},t\in\mathbb{R},
\]
where $\mathcal{X}_{\v v}:\mathscr{S}\left(\mathbb{R}^{n}\right)\rightarrow\mathscr{S}\left(\v v^{\perp}\right)$
denotes the x-ray transform restricted to the direction $\v v$, and
the tensor product $\mathcal{X}_{\v v}\phi\otimes\psi_{0}$ is interpreted
as:
\[
\mathcal{X}_{\v v}\phi\otimes\psi_{0}\left(\v u+s\v v\right)=\mathcal{X}_{\v v}\phi\left(\v u\right)\psi_{0}\left(s\right),\quad\v u\in\v v^{\perp},s\in\mathbb{R}.
\]

We then define the \textbf{distributional directional antiderivative}
by
\[
\left\langle \mathcal{I}_{\v v}w,\phi\right\rangle =-\left\langle w,\mathcal{I}_{\v v}\phi\right\rangle .
\]

\end{defn}
Since we defined $\mathcal{I}_{\v v}$ in a way that depends on an
arbitrary choice of $\psi_{0}$, we will want to verify that a different
choice of $\psi_{0}$ will not alter $\mathcal{I}_{\v v}$.
\begin{prop}
While $\mathcal{I}_{\v v}\phi$ depends on choice of $\psi_{0}$,
$\mathcal{I}_{\v v}w$ does not, so long as $\mathrm{supp}\,\psi_{0}\subseteq\left(-\infty,t_{\min}\right)$.\end{prop}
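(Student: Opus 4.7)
The plan is to fix two admissible densities $\psi_{0},\tilde{\psi}_{0}\in\mathcal{C}_{0}^{\infty}(\mathbb{R})$ of unit integral supported in $(-\infty,t_{\min})$, denote by $\mathcal{I}_{\v v}$ and $\tilde{\mathcal{I}}_{\v v}$ the corresponding operators on $\Sch(\mathbb{R}^{n})$, and show $\langle w,\mathcal{I}_{\v v}\phi-\tilde{\mathcal{I}}_{\v v}\phi\rangle=0$ for every $\phi\in\Sch(\mathbb{R}^{n})$.

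First, I would compute the difference directly from the definition. Since the two expressions agree apart from the subtracted tensor products, linearity of the inner integral pulls the factor $\mathcal{X}_{\v v}\phi(\v u)$ outside, giving
\[
\bigl(\mathcal{I}_{\v v}\phi-\tilde{\mathcal{I}}_{\v v}\phi\bigr)(\v u+t\v v)=\mathcal{X}_{\v v}\phi(\v u)\,\Psi(t),\qquad\Psi(t):=\int_{-\infty}^{t}\bigl(\tilde{\psi}_{0}-\psi_{0}\bigr)(s)\,ds.
\]
Because $\psi_{0}$ and $\tilde{\psi}_{0}$ both have unit mass, $\Psi$ vanishes as $t\to\pm\infty$; because both are supported in $(-\infty,t_{\min})$, $\Psi$ is smooth and its support is a compact subset of $(-\infty,t_{\min})$. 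Hence the difference equals the Schwartz function $\mathcal{X}_{\v v}\phi\otimes\Psi$, whose support lies in the closed half-space $\{\v x:\v x\cdot\v v\le t^{*}\}$ for some $t^{*}<t_{\min}$.

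Next, I would invoke the support condition (\ref{eq:support-condition}): this half-space is disjoint from $\mathrm{supp}\,w$. Therefore $\langle w,\mathcal{X}_{\v v}\phi\otimes\Psi\rangle=0$, and applying $\langle\mathcal{I}_{\v v}w,\phi\rangle=-\langle w,\mathcal{I}_{\v v}\phi\rangle$ yields $\langle\mathcal{I}_{\v v}w,\phi\rangle=\langle\tilde{\mathcal{I}}_{\v v}w,\phi\rangle$ for every $\phi\in\Sch(\mathbb{R}^{n})$, which is the claim.

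The one subtlety I would need to address is that $\mathcal{X}_{\v v}\phi\otimes\Psi$ is Schwartz but not compactly supported in the $\v v^{\perp}$ directions, so pairing it with a general $w\in\mathscr{D}'(\mathbb{R}^{n})$ requires care. This is handled by the same cutoff mechanism that makes $\langle w,\mathcal{I}_{\v v}\phi\rangle$ itself well-defined in Definition \ref{defn:dist-anti-partial-deriv}: multiply by a smooth function equal to $1$ on a neighborhood of $\mathrm{supp}\,w$ and supported in $\{\v x\cdot\v v>t^{*}\}$; since $\mathrm{supp}\,\Psi$ lies strictly below $t^{*}$ in the $\v v$-direction, the cut-off product is identically zero, and the value of the pairing is unambiguously $0$ independent of the cutoff. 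The substantive step is the separation of supports; the rest is bookkeeping.
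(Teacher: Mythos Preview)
Your proposal is correct and follows essentially the same route as the paper: compute the difference $\mathcal{I}_{\v v}\phi-\tilde{\mathcal{I}}_{\v v}\phi$, recognize it as $\mathcal{X}_{\v v}\phi\otimes\Psi$ with $\Psi$ compactly supported in $(-\infty,t_{\min})$, and conclude by support separation from $w$. Your added remark about the well-definedness of pairing a merely Schwartz (not compactly supported) function with a general $w\in\mathscr{D}'(\mathbb{R}^{n})$ is a point the paper's proof leaves implicit, so your version is in fact slightly more careful.
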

\begin{proof}
Let $\psi_{0},\psi_{1}\in\mathcal{C}_{0}^{\infty}\left(\mathbb{R}\right)$both
have support in $\left(-\infty,t_{\min}\right)$, and take $\mathcal{I}_{\v v}^{0}$
and $\mathcal{I}_{\v v}^{1}$ as defined above in terms of $\psi_{0}$
and $\psi_{1}$, respectively. Then for $\phi\in\Sch\left(\mathbb{R}^{n}\right)$
we observe that:
\begin{flalign*}
 & \hfill &  & \mathcal{I}_{\v v}^{0}\phi\left(\v u+t\v v\right)-\mathcal{I}_{\v v}^{1}\phi\left(\v u+t\v v\right) & \hfill\\
 &  &  & \hspace{3em}=\int_{-\infty}^{t}\phi\left(\v u+s\v v\right)-\mathcal{X}_{\v v}\phi\otimes\psi_{0}\left(\v u+s\v v\right)\, ds\\
 &  &  & \hspace{6em}-\int_{-\infty}^{t}\phi\left(\v u+s\v v\right)-\mathcal{X}_{\v v}\phi\otimes\psi_{1}\left(\v u+s\v v\right)\, ds\\
 &  &  & \hspace{3em}=\mathcal{X}_{\v v}\phi\left(\v u\right)\int_{-\infty}^{t}\left(\psi_{0}\left(s\right)-\psi_{1}\left(s\right)\right)\, ds.
\end{flalign*}
For $t$ below or above both supports of $\psi_{0}$ and $\psi_{1}$,
this integral is zero. In particular, the support of $\mathcal{I}_{\v v}^{0}\phi-\mathcal{I}_{\v v}^{1}\phi$
is contained inside $\v v^{\perp}+\left(-\infty,t_{\min}\right)\v v$.
Hence:
\begin{flalign*}
 & \hfill & \left\langle \mathcal{I}_{\v v}^{0}w,\phi\right\rangle -\left\langle \mathcal{I}_{\v v}^{1}w,\phi\right\rangle  & =-\left\langle w,\mathcal{I}_{\v v}^{0}\phi\right\rangle +\left\langle w,\mathcal{I}_{\v v}^{1}\phi\right\rangle  & \hfill\\
 &  &  & =-\left\langle w,\mathcal{I}_{\v v}^{0}\phi-\mathcal{I}_{\v v}^{1}\phi\right\rangle \\
 &  &  & =0. & \qedhere
\end{flalign*}

\end{proof}
We now wish to verify that $\mathcal{I}_{\v v}$ acts on functions
in $\mathcal{L}^{1}\left(\mathbb{R}^{n}\right)$ satisfying the support
condition \ref{eq:support-condition} in the desired manner.
\begin{prop}[Distributional Anti-partial derivative of $\mathcal{L}^{1}$ functions]
If $f\in\mathcal{L}^{1}\left(\mathbb{R}^{n}\right)$ satisfies the
support condition \ref{eq:support-condition}, then $\mathcal{I}_{\v v}f$
is in fact a function given by
\[
\mathcal{I}_{\v v}f\left(\v u+s\v v\right)=\int_{-\infty}^{s}f\left(\v u+t\v v\right)\, dt.
\]
\end{prop}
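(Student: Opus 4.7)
The plan is to reduce $\mathcal{I}_{\v v}\phi$ to a much simpler form on the half-space where $f$ is supported, and then collapse the resulting iterated integral by Fubini. Set $\Psi_0(t)=\int_{-\infty}^t\psi_0(s)\,ds$. Then $\Psi_0$ is smooth, vanishes below $\mathrm{supp}\,\psi_0$, and equals $1$ everywhere to the right of $\mathrm{supp}\,\psi_0$; in particular $\Psi_0(t)=1$ for all $t\geq t_{\min}$. Splitting the defining integral and using that $\int_{-\infty}^\infty\phi(\v u+s\v v)\,ds=\mathcal{X}_{\v v}\phi(\v u)$, one rewrites
\[
\mathcal{I}_{\v v}\phi(\v u+t\v v)=\int_{-\infty}^t\phi(\v u+s\v v)\,ds-\mathcal{X}_{\v v}\phi(\v u)\,\Psi_0(t),
\]
which for $t\geq t_{\min}$ collapses to $\mathcal{I}_{\v v}\phi(\v u+t\v v)=-\int_t^\infty\phi(\v u+s\v v)\,ds$. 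The whole point of the auxiliary cutoff $\psi_0$ was to make this cancellation happen exactly on $\mathrm{supp}\,f$.

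Next I would expand $\langle\mathcal{I}_{\v v}f,\phi\rangle=-\langle f,\mathcal{I}_{\v v}\phi\rangle$ in the slab coordinates $(\v u,t)\in\v v^\perp\times\mathbb{R}$, in which Lebesgue measure on $\mathbb{R}^n$ decomposes as $d\v u\,dt$. The support condition restricts the $t$-integral to $[t_{\min},\infty)$, and inserting the simplification above yields
\[
\int_{\v v^\perp}\int_{t_{\min}}^\infty\int_t^\infty f(\v u+t\v v)\,\phi(\v u+s\v v)\,ds\,dt\,d\v u.
\]
Swapping the $s$ and $t$ integrals over the triangle $\{t_{\min}\leq t\leq s\}$ and using that $f(\v u+t\v v)$ vanishes for $t<t_{\min}$, the inner integral becomes $\int_{-\infty}^s f(\v u+t\v v)\,dt$, which is the classical antiderivative $g(\v u+s\v v)$. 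Reassembling into a single integral on $\mathbb{R}^n$ gives $\langle g,\phi\rangle$.

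The one step that actually requires checking is Fubini on the triangular region. The decisive estimate is
\[
\int_t^\infty|\phi(\v u+s\v v)|\,ds\leq\mathcal{X}_{\v v}|\phi|(\v u),
\]
together with the fact that $\mathcal{X}_{\v v}|\phi|$ is bounded on $\v v^\perp$ (being, up to sign, the x-ray transform of a Schwartz function). This produces the total-mass bound $\|\mathcal{X}_{\v v}|\phi|\|_\infty\|f\|_{L^1(\mathbb{R}^n)}<\infty$, which simultaneously justifies the interchange of the $s$- and $t$-integrals and the Fubini step across $\v v^\perp$. I expect this integrability check to be the main technical obstacle, but it is a mild one; the remainder of the argument is pure bookkeeping in the slab coordinates adapted to $\v v$.
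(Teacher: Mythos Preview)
Your proposal is correct and follows essentially the same route as the paper: write $\langle\mathcal{I}_{\v v}f,\phi\rangle=-\langle f,\mathcal{I}_{\v v}\phi\rangle$ in slab coordinates $(\v u,t)$, use the support condition together with $\int\psi_0=1$ to reduce $\mathcal{I}_{\v v}\phi$ on $\{t\ge t_{\min}\}$ to $-\int_t^\infty\phi(\v u+s\v v)\,ds$, and then swap the $s$- and $t$-integrals over the triangle. The only organizational difference is that the paper carries the $\psi_0$-term through the Fubini step and kills it afterward (via the same disjoint-support observation), whereas you dispose of it before Fubini; your added $L^1$ bound justifying the interchange is a detail the paper leaves implicit.
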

\begin{proof}
Let $t_{\min}=\min_{\v x\in\mathrm{supp\,}f}\v x\cdot\v v$, and choose
$\psi_{0}$ as described in \ref{defn:dist-anti-partial-deriv}. Observe:
\begin{flalign*}
 & \hfill & \left\langle \mathcal{I}_{\v v}f,\phi\right\rangle  & =-\left\langle f,\mathcal{I}_{\v v}\phi\right\rangle  & \hfill\\
 &  &  & =-\int_{\mathbb{R}^{n}}f\left(\v x\right)\mathcal{I}_{\v v}\phi\left(\v x\right)\, d\v x\\
 &  &  & =-\int_{\v v^{\perp}}\int_{\mathbb{R}}f\left(\v u+t\v v\right)\mathcal{I}_{\v v}\phi\left(\v u+t\v v\right)\, dt\, d\v u\\
 &  &  & =-\int_{\v v^{\perp}}\int_{\mathbb{R}}f\left(\v u+t\v v\right)\int_{-\infty}^{t}\left(\phi\left(\v u+s\v v\right)-\mathcal{X}_{\v v}\phi\otimes\psi_{1}\left(\v u+s\v v\right)\right)\, ds\, dt\, d\v u\\
 &  &  & =\int_{\v v^{\perp}}\int_{\mathbb{R}}f\left(\v u+t\v v\right)\int_{t}^{\infty}\left(\phi\left(\v u+s\v v\right)-\mathcal{X}_{\v v}\phi\otimes\psi_{1}\left(\v u+s\v v\right)\right)\, ds\, dt\, d\v u\\
 &  &  & =\int_{\v v^{\perp}}\int_{\mathbb{R}}\int_{-\infty}^{s}f\left(\v u+t\v v\right)\left(\phi\left(\v u+s\v v\right)-\mathcal{X}_{\v v}\phi\otimes\psi_{1}\left(\v u+s\v v\right)\right)\, dt\, ds\, d\v u\\
 &  &  & =\int_{\v v^{\perp}}\int_{\mathbb{R}}\int_{-\infty}^{s}f\left(\v u+t\v v\right)\, dt\,\phi\left(\v u+s\v v\right)\, ds\, d\v u\\
 &  &  & \hspace{3em}-\int_{\v v^{\perp}}\int_{\mathbb{R}}\int_{-\infty}^{s}f\left(\v u+t\v v\right)\, dt\,\psi_{0}\left(s\right)\, ds\,\mathcal{X}_{\v v}\phi\left(\v u\right)\, d\v u
\end{flalign*}
The latter integral vanishes for all $s$, since:
\[
\int_{-\infty}^{s}f\left(\v u+t\v v\right)\, dt=0,\quad s<t_{\min},
\]
and:
\[
\psi_{0}\left(s\right)=0,\quad s>t_{\min}.
\]
Hence:
\[
\left\langle \mathcal{I}_{\v v}f,\phi\right\rangle =\int_{\v v^{\perp}}\int_{\mathbb{R}}\int_{-\infty}^{s}f\left(\v u+t\v v\right)\, dt\,\phi\left(\v u+s\v v\right)\, ds\, d\v u.\qedhere
\]
\end{proof}
\begin{prop}
For $w\in\mathcal{E}'\left(\mathbb{R}^{n}\right)$:
\[
\mathcal{D}_{\v v}\mathcal{I}_{\v v}w=w,\quad\mathcal{I}_{\v v}\mathcal{D}_{\v v}w=w.
\]
\end{prop}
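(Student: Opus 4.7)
The plan is to transfer both operators back onto the test function through duality, reducing each identity to a pointwise statement about Schwartz functions (or at worst smooth functions). Since $\langle \mathcal{D}_{\v v}w,\phi\rangle = -\langle w,\mathcal{D}_{\v v}\phi\rangle$ and $\langle \mathcal{I}_{\v v}w,\phi\rangle = -\langle w,\mathcal{I}_{\v v}\phi\rangle$ by definition, the two claims become
\[
\langle w,\mathcal{I}_{\v v}\mathcal{D}_{\v v}\phi\rangle = \langle w,\phi\rangle \quad\text{and}\quad \langle w,\mathcal{D}_{\v v}\mathcal{I}_{\v v}\phi\rangle = \langle w,\phi\rangle,
\]
for every $\phi\in\Sch(\mathbb{R}^n)$.

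For the first identity, I would show the pointwise equality $\mathcal{I}_{\v v}\mathcal{D}_{\v v}\phi = \phi$ on all of $\mathbb{R}^n$. The fundamental theorem of calculus along the line $s\mapsto \v u + s\v v$ gives $\int_{-\infty}^t \mathcal{D}_{\v v}\phi(\v u + s\v v)\,ds = \phi(\v u + t\v v)$, where the vanishing at $-\infty$ uses Schwartz decay. The subtraction term in the definition of $\mathcal{I}_{\v v}$ vanishes as well, because the full line integral $\mathcal{X}_{\v v}\mathcal{D}_{\v v}\phi(\v u) = \int_{\mathbb{R}} \mathcal{D}_{\v v}\phi(\v u + s\v v)\,ds = 0$, again by Schwartz decay at both ends. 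Thus $\mathcal{I}_{\v v}\mathcal{D}_{\v v}\phi = \phi$ identically, and pairing with $w$ yields the desired equality with no support hypothesis needed.

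For the second identity, differentiate directly under the integral in the definition of $\mathcal{I}_{\v v}\phi$ to obtain
\[
\mathcal{D}_{\v v}\mathcal{I}_{\v v}\phi(\v u + t\v v) = \phi(\v u + t\v v) - \mathcal{X}_{\v v}\phi(\v u)\psi_0(t),
\]
so $\mathcal{D}_{\v v}\mathcal{I}_{\v v}\phi = \phi - \mathcal{X}_{\v v}\phi\otimes\psi_0$. Unlike the previous case, this is \emph{not} the identity as a function, and herein lies the only real subtlety: the identity must hold after testing against $w$, not pointwise. However the correction term $\mathcal{X}_{\v v}\phi\otimes\psi_0$ is supported in $\v v^\perp + \mathrm{supp}\,\psi_0\cdot\v v \subseteq \v v^\perp + (-\infty,t_{\min})\v v$, which is disjoint from $\mathrm{supp}\,w$ by the choice of $\psi_0$ built into Definition~\ref{defn:dist-anti-partial-deriv}. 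Hence $\langle w,\mathcal{X}_{\v v}\phi\otimes\psi_0\rangle = 0$ and the second identity follows. The construction of $\mathcal{I}_{\v v}$ was engineered precisely so that this cancellation against $w$ occurs, so the ``hard step'' is really just the verification that the support condition does its job.
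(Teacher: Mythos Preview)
Your proof is correct and follows essentially the same route as the paper's: both identities are unwound by duality, the pointwise equality $\mathcal{I}_{\v v}\mathcal{D}_{\v v}\phi=\phi$ (via $\mathcal{X}_{\v v}\mathcal{D}_{\v v}\phi=0$ and the fundamental theorem of calculus) handles $\mathcal{D}_{\v v}\mathcal{I}_{\v v}w=w$, while the computation $\mathcal{D}_{\v v}\mathcal{I}_{\v v}\phi=\phi-\mathcal{X}_{\v v}\phi\otimes\psi_{0}$ together with the support disjointness of $\mathcal{X}_{\v v}\phi\otimes\psi_{0}$ and $w$ handles $\mathcal{I}_{\v v}\mathcal{D}_{\v v}w=w$. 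Your write-up simply supplies a bit more detail on the support argument than the paper does.
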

\begin{proof}
We first observe that for $\phi\in\mathscr{S}\left(\mathbb{R}^{n}\right)$,
$\mathcal{X}_{\v v}\mathcal{D}_{\v v}\phi=0$, and so:
\[
\mathcal{I}_{\v v}\mathcal{D}_{\v v}\phi\left(\v u+t\v v\right)=\int_{-\infty}^{t}\phi\left(\v u+s\v v\right)\, ds=\phi\left(\v u+t\v v\right).
\]
Then:
\begin{flalign*}
 & \hfill & \left\langle \mathcal{D}_{\v v}\mathcal{I}_{\v v}w,\phi\right\rangle  & =-\left\langle \mathcal{I}_{\v v}w,\mathcal{D}_{\v v}\phi\right\rangle  & \hfill\\
 &  &  & =\left\langle w,\mathcal{I}_{\v v}\mathcal{D}_{\v v}\phi\right\rangle \\
 &  &  & =\left\langle w,\phi\right\rangle .
\end{flalign*}
On the other hand,
\[
\mathcal{D}_{\v v}\mathcal{I}_{\v v}\phi\left(\v u+t\v v\right)=\phi\left(\v u+s\v v\right)-\mathcal{X}_{\v v}\phi\otimes\psi_{0}\left(\v u+t\v v\right),
\]
and since $\mathcal{X}_{\v v}\phi\otimes\psi_{0}$ is supported away
from the support of $w$, a similar computation also yields:
\[
\mathcal{I}_{\v v}\mathcal{D}_{\v v}w=w.\qedhere
\]
\end{proof}
\begin{prop}
Let $U\subseteq\mathbb{R}^{n}$ be open and assume $U$ is invariant
under translation in the direction $-\v v$. That is, $U-t\v v\subseteq U$
for $t\ge0$. If $w_{1},w_{2}\in\mathcal{E}'\left(\mathbb{R}^{n}\right)$,
and are equal on $U$, then $\mathcal{I}_{\v v}w_{1}=\mathcal{I}_{\v v}w_{2}$
are equal on $U$.\end{prop}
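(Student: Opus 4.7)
My approach is to work with $w := w_1 - w_2 \in \mathcal{E}'(\mathbb{R}^n)$, whose support lies in $\mathbb{R}^n \setminus U$ since $w_1 = w_2$ on $U$; the goal then becomes $\mathcal{I}_{\v v}w|_U = 0$. Fix an arbitrary $\phi \in \mathcal{C}_0^\infty(U)$; the target is $\langle w, \mathcal{I}_{\v v}\phi \rangle = 0$. By the previous proposition I may pick any $\psi_0$ whose support lies sufficiently far in the $-\v v$ direction that the defining formula applies to $w_1$, $w_2$, and $w$ simultaneously; writing $M = \max \mathrm{supp}\,\psi_0$, I can arrange $M < t_{\min}(w)$.

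The key computation is a rewriting of $\mathcal{I}_{\v v}\phi$ on the open half-space $\{\v x : \v x \cdot \v v > M\}$. There $\int_{-\infty}^{\v x \cdot \v v} \psi_0 = 1$, and combining this with the identity $\mathcal{X}_{\v v}\phi(\v u) = \int_{\mathbb{R}} \phi(\v u + s\v v)\,ds$, a routine unwinding of the definition produces
\[
\mathcal{I}_{\v v}\phi(\v x) = -\int_0^\infty \phi(\v x + s\v v)\,ds =: \Phi(\v x), \quad \v x \cdot \v v > M.
\]
The function $\Phi$ is smooth on all of $\mathbb{R}^n$, and the crucial structural fact is $\mathrm{supp}\,\Phi \subseteq U$. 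Indeed, if $\Phi(\v x) \neq 0$ then $\v x + s\v v \in \mathrm{supp}\,\phi$ for some $s \geq 0$, placing $\v x$ in $\mathrm{supp}\,\phi - [0,\infty)\v v$. Compactness of $\mathrm{supp}\,\phi$ makes this set closed (the parameter $s$ is automatically bounded along any convergent sequence), and the hypothesized invariance $U - s\v v \subseteq U$ for $s \geq 0$ places it inside $U$.

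To conclude: $\mathrm{supp}\,w \subseteq \{\v x \cdot \v v \geq t_{\min}(w)\} \subseteq \{\v x \cdot \v v > M\}$, on which $\mathcal{I}_{\v v}\phi = \Phi$. Thus $\mathcal{I}_{\v v}\phi - \Phi$ vanishes in an open neighborhood of $\mathrm{supp}\,w$, yielding $\langle w, \mathcal{I}_{\v v}\phi \rangle = \langle w, \Phi \rangle$. Since $\mathrm{supp}\,\Phi \subseteq U$ and $\mathrm{supp}\,w \subseteq \mathbb{R}^n \setminus U$ are disjoint closed sets with $\mathrm{supp}\,w$ compact, $\Phi$ vanishes in an open neighborhood of $\mathrm{supp}\,w$ and so $\langle w, \Phi \rangle = 0$. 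The main obstacle is recognizing this clean form $\Phi$ in the half-space and establishing $\mathrm{supp}\,\Phi \subseteq U$ from the invariance hypothesis; the remaining support bookkeeping is routine.
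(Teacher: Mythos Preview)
Your proof is correct and follows essentially the same approach as the paper: both arguments hinge on the observation that, in the half-space $\{\v x\cdot\v v>M\}$ above $\mathrm{supp}\,\psi_0$, the function $\mathcal{I}_{\v v}\phi$ is supported in $U$ (your $\Phi$ makes this explicit), while the complementary half-space misses $\mathrm{supp}\,w$ entirely. The only cosmetic difference is that the paper handles the split via a partition of unity on $U\cup\{t<t_{\min}\}$, whereas you replace $\mathcal{I}_{\v v}\phi$ by the globally defined $\Phi$ and subtract directly; your version is slightly cleaner but the underlying mechanism is identical.
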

\begin{proof}
Let $\phi$ be supported in $U$. Then if we inspect:
\[
\mathcal{I}_{\v v}\phi\left(\v u+t\v v\right)=\int_{-\infty}^{t}\phi\left(\v u+s\v v\right)-\mathcal{X}_{\v v}\phi\otimes\psi_{0}\left(\v u+s\v v\right)\, ds,
\]
Then for $\v u+t\v v\notin U$, we must have $\v u+\tau\v v\notin U$
for $\tau\ge t$ and so:
\begin{flalign*}
 & \hfill & \mathcal{I}_{\v v}\phi\left(\v u+t\v v\right) & =\int_{-\infty}^{\infty}\phi\left(\v u+s\v v\right)\, ds-\mathcal{X}_{\v v}\phi\left(\v u\right)\int_{-\infty}^{t}\psi_{0}\left(s\right)\, ds & \hfill\\
 &  &  & =\mathcal{X}_{\v v}\phi\left(\v u\right)\int_{t}^{\infty}\psi_{0}\left(s\right)\, ds.
\end{flalign*}
In particular, $\mathcal{I}_{\v v}\phi\left(\v u+t\v v\right)=0$
when $\v u+t\v v\notin U$ and\emph{ $t\ge t_{\min}$. }In particular:
\[
\mathrm{supp}\,\mathcal{I}_{\v v}\phi\subseteq U\cup V,\quad V=\left\{ \v u+t\v v:t<t_{\min}\right\} .
\]
We now choose a partiton of unity $\rho_{U}$ and $\rho_{V}$ for
$U$ and $V$ so that $\rho_{U}+\rho_{V}=1$ on $U\cup V$, $\mathrm{supp}\,\rho_{U}\subseteq U$,
and $\mathrm{supp}\,\rho_{V}\subseteq V$. Then 
\begin{flalign*}
 & \hfill & \left\langle \mathcal{I}_{\v v}w_{1},\phi\right\rangle  & =-\left\langle w_{1},\mathcal{I}_{\v v}\phi\right\rangle  & \hfill\\
 &  &  & =-\left\langle w_{1},\rho_{U}\mathcal{I}_{\v v}\phi\right\rangle -\underbrace{\left\langle w_{1},\rho_{V}\mathcal{I}_{\v v}\phi\right\rangle }_{=0}\\
 &  &  & =-\left\langle w_{2},\rho_{U}\mathcal{I}_{\v v}\phi\right\rangle -\underbrace{\left\langle w_{2},\rho_{V}\mathcal{I}_{\v v}\phi\right\rangle }_{=0}\\
 &  &  & =-\left\langle w_{2},\mathcal{I}_{\v v}\phi\right\rangle \\
 &  &  & =\left\langle \mathcal{I}_{\v v}w_{2},\phi\right\rangle . & \qedhere
\end{flalign*}
\end{proof}
\begin{prop}
\label{thm:symmetry}Let $t_{0}>t_{\max}=\max_{\v x\in\mathrm{supp}\, w}\v x\cdot\v v$,
then define $w^{\star}$ by:
\[
\left\langle w^{\star},\phi\right\rangle =\left\langle w,\phi\left(\v x+t\vec{v}\right)-\phi\left(\v x+\left(2t_{0}-t\right)\v v\right)\right\rangle .
\]
Then $w^{\star}$ and $\mathcal{I}_{\v v}w^{\star}$ are distribution
with odd and even symmetries across the hyperplane $\left\{ t=t_{0}\right\} $,
and furthermore, $\mathcal{I}_{\v v}w^{\star}$ is compactly supported,
and is equal to $\mathcal{I}_{\v v}w$ on $\left\{ t<t_{0}\right\} $.\end{prop}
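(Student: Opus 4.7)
The plan is to recognize that the defining formula for $w^{\star}$ exhibits it as $w-R_{t_{0}}w$, where $R_{t_{0}}$ is the affine reflection $\v x\mapsto\v x+2(t_{0}-\v x\cdot\v v)\v v$ across the hyperplane $\{\v x\cdot\v v=t_{0}\}$, acting on distributions by pullback (it is measure-preserving and involutive). With this identification, odd symmetry is automatic: $R_{t_{0}}w^{\star}=R_{t_{0}}w-R_{t_{0}}^{2}w=R_{t_{0}}w-w=-w^{\star}$. For the equality of $\mathcal{I}_{\v v}w^{\star}$ and $\mathcal{I}_{\v v}w$ on $\{\v x\cdot\v v<t_{0}\}$, note that $\mathrm{supp}\,R_{t_{0}}w\subseteq\{\v x\cdot\v v\ge2t_{0}-t_{\max}\}$, which by the hypothesis $t_{0}>t_{\max}$ lies strictly above $t_{0}$. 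Hence $w^{\star}$ agrees with $w$ on the open, $-\v v$-invariant set $\{\v x\cdot\v v<t_{0}\}$, and the preceding proposition delivers the claim.

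The chief obstacle is the even symmetry of $h:=\mathcal{I}_{\v v}w^{\star}$. My strategy is to set $g:=h-R_{t_{0}}h$ and show $g=0$ by a differentiate-then-disintegrate argument. A chain-rule calculation on test functions (the linear part of $R_{t_{0}}$ sends $\v v$ to $-\v v$), dualized, gives the anticommutation identity $\mathcal{D}_{\v v}R_{t_{0}}=-R_{t_{0}}\mathcal{D}_{\v v}$. Combining this with $\mathcal{D}_{\v v}h=w^{\star}$ (from the earlier proposition) and the already-established oddness $R_{t_{0}}w^{\star}=-w^{\star}$ yields $\mathcal{D}_{\v v}g=w^{\star}+R_{t_{0}}w^{\star}=0$. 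Thus $g$ is translation-invariant in the $\v v$ direction, i.e.\ depends only on the $\v v^{\perp}$ coordinate, on which $R_{t_{0}}$ acts trivially; this forces $R_{t_{0}}g=g$. But by construction $R_{t_{0}}g=R_{t_{0}}h-h=-g$, so $g=0$ and $h$ is $R_{t_{0}}$-invariant. The subtle step is the passage from $\mathcal{D}_{\v v}g=0$ to $R_{t_{0}}g=g$, which I expect to verify cleanly via the standard disintegration of a $\v v$-translation-invariant distribution as a pullback from $\v v^{\perp}$ (or, equivalently, by direct pairing against test functions of the form $\chi(\v u)\otimes\rho(s)$).

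Finally, compact support of $\mathcal{I}_{\v v}w^{\star}$ falls out of the preceding structure. Applying the previous proposition to $w_{1}=w^{\star}$ and $w_{2}=0$ on the $-\v v$-invariant open set $\{\v x\cdot\v v<t_{\min}\}$, where $w^{\star}$ vanishes, gives $\mathcal{I}_{\v v}w^{\star}=0$ there, and the just-proved even symmetry transports this vanishing to $\{\v x\cdot\v v>2t_{0}-t_{\min}\}$. In the transverse directions, the same proposition applied to cylinders $V+\mathbb{R}\v v$ with $V\subset\v v^{\perp}$ open and disjoint from the compact projection of $\mathrm{supp}\,w$ onto $\v v^{\perp}$ (which equals that of $\mathrm{supp}\,w^{\star}$, since $R_{t_{0}}$ preserves the $\v v^{\perp}$ component) confines $\mathrm{supp}\,\mathcal{I}_{\v v}w^{\star}$ within the tube over this compact shadow. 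These two bounds together yield compact support.
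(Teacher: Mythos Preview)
Your argument is correct. The odd symmetry, the equality $\mathcal{I}_{\v v}w^{\star}=\mathcal{I}_{\v v}w$ on $\{t<t_{0}\}$, and the compact support are handled essentially as in the paper (you simply make explicit what the paper compresses into ``the symmetry easily implies\ldots''). The genuine divergence is in the even symmetry of $h=\mathcal{I}_{\v v}w^{\star}$.

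The paper proves even symmetry by unwinding the definition of $\mathcal{I}_{\v v}$: it pairs $\mathcal{I}_{\v v}w^{\star}$ against $R_{t_{0}}^{\ast}\phi$, performs substitutions in the integral $\int_{-\infty}^{t}\bigl(\phi-\mathcal{X}_{\v v}\phi\otimes\psi_{0}\bigr)\,ds$, and isolates a residual term of the form $\mathcal{X}_{\v v}\phi(\v u)\int_{-\infty}^{t}\bigl(\psi_{0}(s)-\psi_{0}(2t_{0}-s)\bigr)\,ds$, which is even in $t$ about $t_{0}$ and therefore killed by the already-established oddness of $w^{\star}$. This is a hands-on calculation tied to the particular construction of $\mathcal{I}_{\v v}$ via the auxiliary cutoff $\psi_{0}$.

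Your route is structural rather than computational: you differentiate $g=h-R_{t_{0}}h$, use the anticommutation $\mathcal{D}_{\v v}R_{t_{0}}=-R_{t_{0}}\mathcal{D}_{\v v}$ together with $\mathcal{D}_{\v v}h=w^{\star}$ and $R_{t_{0}}w^{\star}=-w^{\star}$ to obtain $\mathcal{D}_{\v v}g=0$, and then invoke the standard fact that a distribution annihilated by $\mathcal{D}_{\v v}$ is a pullback from $\v v^{\perp}$, hence fixed by $R_{t_{0}}$; combined with the tautological $R_{t_{0}}g=-g$, this forces $g=0$. This avoids touching the integral formula or $\psi_{0}$ at all, and would transport unchanged to any right inverse of $\mathcal{D}_{\v v}$. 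The paper's computation, by contrast, has the advantage of being entirely self-contained within the definitions already set up, without appealing to the classification of $\mathcal{D}_{\v v}$-invariant distributions.
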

\begin{proof}
We observe
\begin{flalign*}
 & \hfill & \left\langle w^{\star},\phi\left(\v u+\left(2t_{0}-t\right)\v v\right)\right\rangle  & =\left\langle w,\phi\left(\v u+\left(2t_{0}-t\right)\vec{v}\right)-\phi\left(\v u+t\v v\right)\right\rangle  & \hfill\\
 &  &  & =-\left\langle w,\phi\left(\v u+t\v v\right)-\phi\left(\v u+\left(2t_{0}-t\right)\vec{v}\right)\right\rangle \\
 &  &  & =-\left\langle w^{\star},\phi\left(\v u+t\v v\right)\right\rangle .
\end{flalign*}
This implies that if $\phi$ has even symmetry across $\left\{ t=t_{0}\right\} $,
then $\left\langle w^{\star},\phi\right\rangle =0$. Then
\begin{flalign*}
 & \hfill &  & \left\langle \mathcal{I}_{\v v}w^{\star},\phi\left(\v u+\left(2t_{0}-t\right)\v v\right)\right\rangle  & \hfill\\
 &  &  & \hspace{3em}=-\left\langle w^{\star},\mathcal{I}_{\v v}\left\{ \phi\left(\v u+\left(2t_{0}-t\right)\vec{v}\right)\right\} \right\rangle \\
 &  &  & \hspace{3em}=-\left\langle w^{\star},\int_{-\infty}^{t}\phi\left(\v u+\left(2t_{0}-s\right)\v v\right)-\mathcal{X}_{\v v}\phi\otimes\psi_{0}\left(\v u+t\v v\right)\, ds\right\rangle \\
 &  &  & \hspace{3em}=-\left\langle w^{\star},\int_{2t_{0}-t}^{\infty}\phi\left(\v u+s\v v\right)-\mathcal{X}_{\v v}\phi\otimes\psi_{0}\left(\v u+\left(2t_{0}-s\right)\v v\right)\, ds\right\rangle \\
 &  &  & \hspace{3em}=\left\langle w^{\star},\int_{-\infty}^{2t_{0}-t}\phi\left(\v u+s\v v\right)-\mathcal{X}_{\v v}\phi\otimes\psi_{0}\left(\v u+\left(2t_{0}-s\right)\v v\right)\, ds\right\rangle \\
 &  &  & \hspace{3em}=\left\langle w^{\star},\int_{t}^{\infty}\phi\left(\v u+s\v v\right)-\mathcal{X}_{\v v}\phi\otimes\psi_{0}\left(\v u+\left(2t_{0}-s\right)\v v\right)\, ds\right\rangle \\
 &  &  & \hspace{3em}=-\left\langle w^{\star},\int_{-\infty}^{t}\phi\left(\v u+s\v v\right)-\mathcal{X}_{\v v}\phi\left(\v u\right)\left(\psi_{0}\left(s\right)-\psi_{0}\left(s\right)+\psi_{0}\left(2t_{0}-s\right)\right)\, ds\right\rangle \\
 &  &  & \hspace{3em}=-\left\langle w^{\star},\mathcal{I}_{\v v}\phi\right\rangle +\left\langle w^{\star},\mathcal{X}_{\v v}\phi\left(\v u\right)\int_{-\infty}^{t}\psi_{0}\left(s\right)-\psi_{0}\left(2t_{0}-s\right)\, ds\right\rangle .
\end{flalign*}
We then observe that $\int_{-\infty}^{t}\psi_{0}\left(s\right)-\psi_{0}\left(2t_{0}-s\right)\, ds$
has even symmetry across $\left\{ t=t_{0}\right\} $, and so:
\[
\left\langle \mathcal{I}_{\v v}w^{\star},\phi\left(\v x+\left(2t_{0}-t\right)\v v\right)\right\rangle =\left\langle \mathcal{I}_{\v v}w^{\star},\phi\right\rangle .
\]
The symmetry easily implies that $\mathcal{I}_{\v v}w^{\star}$ must
be compactly-supported. Furthermore, it is clear that $w=w^{\star}$
on $\left\{ t<t_{0}\right\} $, so we have $\mathcal{I}_{\v v}w=\mathcal{I}_{\v v}w^{\star}$
on $\left\{ t<t_{0}\right\} $.
\end{proof}
We will now establish a relationship between $WF\left(w\right)$ and
$WF\left(\mathcal{I}_{\v v}w\right)$. But first, we start with the
following theorem:
\begin{thm}[Microlocal property\cite{hormander1990AnaLinParDifOpeIDisTheFouAna}]
\label{thm:microlocal}If $P$ is a differential operator of order
$m$ with $\mathcal{C}^{\infty}$ coefficients on a manifold $X$,
thenLet $u\in\mathcal{S}'$ and $a\in S^{\infty}$; then
\[
WF\left(u\right)\subseteq\mathrm{Char\,}P\cup WF\left(Pu\right),\quad u\in\mathscr{D}'\left(X\right),
\]
where the characteristic set $\mathrm{Char\,}P$ is defined by
\[
\mathrm{Char}\, P=\left\{ \left(\v x,\v{\xi}\right)\in T^{\star}\left(X\right)\,\middle|\, P_{m}\left(\v x,\v{\xi}\right)=0\right\} =\mathbb{R}^{n}\times\v v^{\perp},
\]
and $P_{m}$ is the principal symbol of $P$.
\end{thm}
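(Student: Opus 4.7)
The plan is to prove the contrapositive: if $(\v x_0, \v{\xi}_0) \notin \mathrm{Char}\,P \cup WF(Pu)$, then $(\v x_0, \v{\xi}_0) \notin WF(u)$. The first hypothesis gives $P_m(\v x_0, \v{\xi}_0) \neq 0$, so $P$ is \emph{microlocally elliptic} at this point, while the second gives a conic neighborhood on which $Pu$ is microlocally smooth. The key device is a microlocal parametrix $Q$ for $P$ near $(\v x_0, \v{\xi}_0)$; the conclusion then reduces to combining $Q$'s action on the two pieces of $u = QPu - Ru$.

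To build $Q$, I would invoke the pseudodifferential calculus on $X$. Taking $1/P_m$ as the leading symbol in a conic neighborhood $\Gamma$ of $(\v x_0, \v{\xi}_0)$, one iteratively constructs an asymptotic series $q \sim \sum_{j\ge 0} q_{-m-j}$ whose lower-order terms successively kill the subleading obstructions appearing in the composition expansion $q \,\#\, \sigma(P)$. Borel summation produces an honest symbol $q$, and hence a properly supported $\Psi$DO $Q$ of order $-m$ with $QP = I + R$, where the Schwartz kernel of $R$ is smooth over $\Gamma$.

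From there the argument is immediate. Writing $u = QPu - Ru$, the smoothing property of $R$ on $\Gamma$ gives $(\v x_0, \v{\xi}_0) \notin WF(Ru)$, while the pseudolocal property of $\Psi$DOs gives $WF(QPu) \subseteq WF(Pu)$, so by hypothesis $(\v x_0, \v{\xi}_0) \notin WF(QPu)$. Together these place $(\v x_0, \v{\xi}_0)$ outside $WF(u)$, which was the desired contradiction to the contrapositive assumption.

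The only substantive obstacle is the parametrix construction itself, which rests on the full symbol calculus (composition formula, asymptotic summation of symbols, proper supports); everything else is bookkeeping around standard wavefront-set estimates. For the paper's intended application $P = \mathcal{D}_{\v v}$, the principal symbol is $P_1(\v x, \v{\xi}) = i\,\v{\xi}\cdot\v v$, confirming the explicit description $\mathrm{Char}\,\mathcal{D}_{\v v} = \mathbb{R}^n \times \v v^\perp$ claimed in the statement; in that special case $Q$ reduces to Fourier multiplication by $1/(i\,\v{\xi}\cdot\v v)$ times a conic cutoff localizing to $\{|\v{\xi}\cdot\v v| \gtrsim |\v{\xi}|\}$, and can be written down by hand without invoking the general machinery.
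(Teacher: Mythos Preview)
The paper does not prove this theorem at all: it is quoted verbatim from H\"ormander's book as a black-box tool, with only the specialization $P=\mathcal{D}_{\v v}$, $P_1(\v x,\v{\xi})=i\,\v{\xi}\cdot\v v$, $\mathrm{Char}\,P=\mathbb{R}^n\times\v v^{\perp}$ spelled out afterward. Your sketch via a microlocal parametrix---invert $P_m$ on a conic neighborhood, iterate to kill lower-order terms, Borel-sum to get $Q$ with $QP=I+R$ and $R$ smoothing on $\Gamma$, then split $u=QPu-Ru$---is exactly the standard argument found in the cited reference, so there is nothing to contrast: your proposal is correct and coincides with the proof H\"ormander gives.
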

In the case that $P$ is merely a directional derivative, i.e., $P=\mathcal{D}_{\v v}$,
then $P_{m}\left(\v x,\v{\xi}\right)=i\v{\xi}\cdot\v v$, and so
\[
\mathrm{Char}\, P=\mathbb{R}^{n}\times\v v^{\perp}.
\]

While \ref{thm:microlocal} implies that $\mathcal{I}_{\v v}$ will
extend the wavefront set of a distriution by \textbf{\emph{at most}}
$\mathbb{R}^{n}\times\left(\v v^{\perp}\backslash\v 0\right)$, the
following result states that $\mathcal{I}_{\v v}w$ \textbf{\emph{will
not}} contain an element $\left(\v x,\v{\eta}_{0}\right)\in\mathbb{R}^{n}\times\v v^{\perp}$
in its wavefront set if $WF\left(w\right)$ omits $\mathbb{R}^{n}\times\left\{ \v{\eta}_{0}\right\} $
altogether.
\begin{thm}[Main Result]
\label{thm:mainresult}Let $w\in\mathcal{E}'\left(\mathbb{R}^{n}\right)$,
and $\v{\eta}_{0}\in\v v^{\perp}$. If:
\[
WF\left(w\right)\cap\left(\mathbb{R}^{n}\times\left\{ \v{\eta}_{0}\right\} \right)=\emptyset,
\]
then:
\[
WF\left(\mathcal{I}_{\v v}w\right)\cap\left(\mathbb{R}^{n}\times\left\{ \v{\eta}_{0}\right\} \right)=\emptyset.
\]
\end{thm}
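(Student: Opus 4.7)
The plan is to translate the wavefront hypothesis on $w$ into rapid Fourier decay, push that decay through the operator identity $\mathcal{D}_{\v v}\mathcal{I}_{\v v}w=w$, and read off the microlocal regularity of $\mathcal{I}_{\v v}w$. The key obstacle is that this identity forces a division by $\tau:=\v\xi\cdot\v v$, which vanishes precisely on $\v v^\perp$---exactly at the frequencies we care about. The reflection device of Proposition \ref{thm:symmetry} is what will make that division well-behaved.

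First I would reduce to a compactly supported situation. Since $WF$ is a local property, it suffices to prove $(\v x_0,\v\eta_0)\notin WF(\mathcal{I}_{\v v}w)$ for an arbitrary fixed $\v x_0$. Pick $t_0>\max(\v x_0\cdot\v v,\,t_{\max})$ and form $w^{\star}$ as in Proposition \ref{thm:symmetry}; then $w^{\star}\in\mathcal{E}'(\mathbb{R}^n)$, $\mathcal{I}_{\v v}w^{\star}$ is compactly supported, and $\mathcal{I}_{\v v}w^{\star}=\mathcal{I}_{\v v}w$ on $\{t<t_0\}\ni\v x_0$. Reflection across $\{t=t_0\}$ acts on covectors by $\v\eta+\tau\v v\mapsto\v\eta-\tau\v v$ and hence fixes $\v\eta_0\in\v v^\perp$, so the hypothesis is inherited by $w^{\star}$. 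A standard partition-of-unity argument over $\mathrm{supp}\,u$ shows that, for $u\in\mathcal{E}'(\mathbb{R}^n)$, the union $\bigcup_{\v x}WF_{\v x}(u)$ equals the set of directions admitting no conic neighborhood of rapid decay of $\hat u$; applied to $w^{\star}$ this yields rapid decay of $\widehat{w^{\star}}$ in some conic neighborhood of $\v\eta_0$, and it will suffice to prove the same property for $\widehat{\mathcal{I}_{\v v}w^{\star}}$.

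Writing $\v\xi=\v\eta+\tau\v v$ with $\v\eta\in\v v^\perp$, the identity $\mathcal{D}_{\v v}\mathcal{I}_{\v v}w^{\star}=w^{\star}$ Fourier-transforms to
\[
i\tau\,\widehat{\mathcal{I}_{\v v}w^{\star}}(\v\eta,\tau)=\widehat{w^{\star}}(\v\eta,\tau).
\]
To circumvent the obstruction at $\tau=0$, I would shift so the reflection is across $\{t=0\}$: let $\tilde w(\v u+t\v v):=w^{\star}(\v u+(t+t_0)\v v)$, which is odd in $t$, so that $\widehat{\tilde w}(\v\eta,\tau)$ is odd in $\tau$ and in particular vanishes at $\tau=0$. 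The fundamental theorem of calculus then yields
\[
\frac{\widehat{\tilde w}(\v\eta,\tau)}{\tau}=\int_0^1\partial_\tau\widehat{\tilde w}(\v\eta,s\tau)\,ds=-i\int_0^1\widehat{t\,\tilde w}(\v\eta,s\tau)\,ds
\]
for every $\tau\in\mathbb{R}$. Because $WF(t\,\tilde w)\subseteq WF(\tilde w)$ (multiplication by the smooth function $t$ cannot enlarge the wavefront set) and $t\,\tilde w$ is still compactly supported, $\widehat{t\,\tilde w}$ again decays rapidly on some conic neighborhood $\Gamma'$ of $\v\eta_0$.

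A brief geometric check closes the argument: for $(\v\eta,\tau)\in\Gamma'$ and $s\in[0,1]$ the scaled point $(\v\eta,s\tau)$ stays in $\Gamma'$ with $|(\v\eta,s\tau)|\geq c\,|(\v\eta,\tau)|$ for some $c>0$, since shrinking the $\tau$-component only pulls the direction \emph{closer} to $\v\eta_0\in\v v^\perp$ while leaving $\v\eta$---already a definite fraction of $|(\v\eta,\tau)|$ on $\Gamma'$---untouched. The integrand therefore enjoys uniform rapid decay on $\Gamma'$, giving rapid decay of $\widehat{\tilde w}(\v\eta,\tau)/\tau$, and hence of $\widehat{\mathcal{I}_{\v v}w^{\star}}(\v\eta,\tau)$ as well since $|e^{-it_0\tau}|=1$. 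Invoking the Fourier-wavefront equivalence of paragraph two then gives $WF(\mathcal{I}_{\v v}w^{\star})\cap(\mathbb{R}^n\times\{\v\eta_0\})=\emptyset$; restricting to $\{t<t_0\}$ transfers this to $\mathcal{I}_{\v v}w$ at $\v x_0$, completing the proof. The delicate step is precisely the small-$\tau$ regime, where the odd symmetry of $w^{\star}$ is what makes division by $\tau$ regular without losing Schwartz-type decay.
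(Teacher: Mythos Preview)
Your proof is correct and follows essentially the same route as the paper's. Both arguments pass to the odd-symmetric extension $w^{\star}$ of Proposition~\ref{thm:symmetry}, use the Fourier identity $i\tau\,\widehat{\mathcal{I}_{\v v}w^{\star}}=\widehat{w^{\star}}$, exploit the vanishing of $\widehat{w^{\star}}$ at $\tau=0$ together with rapid decay of $\partial_\tau\widehat{w^{\star}}=-i\,\widehat{tw^{\star}}$ (via $WF(tw^{\star})\subseteq WF(w^{\star})$), and then integrate in $\tau$ over a $\tau$-convex conic neighborhood of $\v\eta_0$ to obtain decay of the quotient; your translation centering the symmetry at $t=0$ and your localization at a fixed $\v x_0$ (versus the paper's $t_0\to\infty$) are cosmetic variants of the same argument.
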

\begin{proof}
Let $w^{\star}$ be the distribution extending $w$, having odd symmetry
across a plane $\left\{ t=t_{0}\right\} $, with $t_{0}$ large enough
so that $w=w^{\star}$ on $\left\{ t<t_{0}\right\} $. (\ref{thm:symmetry}).
Then:
\[
WF\left(w^{\star}\right)\cap\left(\mathbb{R}^{n}\times\left\{ \v{\eta}_{0}\right\} \right)=\emptyset,
\]
and it will suffice to show that:
\[
WF\left(\mathcal{I}_{\v v}w^{\star}\right)\cap\left(\mathbb{R}^{n}\times\left\{ \v{\eta}_{0}\right\} \right)=\emptyset.
\]

Indeed, since $\mathcal{I}_{\v v}w^{\star}$ and $w^{\star}$ are
compactly-supported distribution, their Fourier transforms exist as
entire functions, and:
\[
i\tau\widehat{\mathcal{I}_{\v v}w^{\star}}\left(\v{\eta}+\tau\v v\right)=\widehat{w^{\star}}\left(\v{\eta}+\tau\v v\right),\quad\v{\eta}\in\v v^{\perp},\tau\in\mathbb{R}.
\]

Because of the odd symmetry of $w^{\star}$ across the plane $\left\{ t=t_{0}\right\} $,
$\widehat{w^{\star}}$ vanishes on $\v v^{\perp}$, and also:
\[
\widehat{\mathcal{I}_{\v v}w^{\star}}\left(\v{\eta}+\tau\v v\right)=\begin{cases}
\frac{\widehat{w^{\star}}\left(\v{\eta}+\tau\v v\right)}{i\tau}, & \text{if }\tau\ne0,\\
-i\mathcal{D}_{\v v}\widehat{w^{\star}}\left(\v{\eta}\right), & \text{if }\tau=0,
\end{cases}
\]
for $\v{\eta}\in\v v^{\perp}$ and $\tau\in\mathbb{R}$. The case
$\tau=0$ comes from an application of l\textquoteright Hôpital\textquoteright s
rule. However, $-i\mathcal{D}_{\v v}\widehat{w^{\star}}=-\widehat{tw^{\star}}$,
and:
\[
WF\left(-tw^{\star}\right)\subseteq WF\left(w^{\star}\right),
\]
so we can expect $-i\mathcal{D}_{\v v}\widehat{w^{\star}}\left(\v{\xi}\right)$
to decay rapidly in an open conic neighborhood $\Gamma$ of $\v{\eta}_{0}$.
For our following argument, we will require that $\Gamma$ be chosen
to be convex in $\tau$, i.e., if $\v{\eta}+\tau_{k}\v v\in\Gamma$
for $k=1,2$, and $\tau_{1}<\tau_{2}$, then $\v{\eta}+\tau\v v\in\Gamma$
for $\tau_{1}\le\tau\le\tau_{2}$. In particular, we can set:
\[
\Gamma=\left\{ \sigma\v{\eta}_{0}+\v{\xi}+\tau\v v:\sigma,\tau>0,\v{\xi}\in\v v^{\perp}\cap\v{\eta}_{0}^{\perp},\max\left\{ \norm{\xi},\left|\tau\right|\right\} <\varepsilon\sigma\right\} ,
\]
for some $\varepsilon>0$ sufficiently small. Then for each $N\ge0$,
we can choose $C_{N}$ so that:
\[
\left|\mathcal{D}_{\v v}\widehat{w^{\star}}\left(\v{\eta}+\tau\v v\right)\right|\le C_{N}\left(1+\norm{\v{\eta}}^{2}+\tau^{2}\right)^{-\nicefrac{N}{2}},\quad\v{\eta}\in\v v^{\perp},\tau\in\mathbb{R},\v{\eta}+\tau\v v\in\Gamma.
\]
This bound on the derivative then gives us the following bound on
$\widehat{w^{\star}}$:
\[
\left|\widehat{w^{\star}}\left(\v{\eta}+\tau\v v\right)\right|\le C_{N}\left(1+\norm{\v{\eta}}^{2}+\tau^{2}\right)^{-\nicefrac{N}{2}}\left|\tau\right|,\quad\v{\eta}\in\v v^{\perp},\tau\in\mathbb{R},\v{\eta}+\tau\v v\in\Gamma.
\]
It then follows immediately that:
\[
\left|\widehat{\mathcal{I}_{\v v}w^{\star}}\left(\v{\eta}+\tau\v v\right)\right|\le C_{N}\left(1+\norm{\v{\eta}}^{2}+\tau^{2}\right)^{-\nicefrac{N}{2}},\quad\v{\eta}\in\v v^{\perp},\tau\in\mathbb{R},\v{\eta}+\tau\v v\in\Gamma.
\]
As this is true for arbitrary $N\ge0$, this proves that:
\[
WF\left(\mathcal{I}_{\v v}w^{\star}\right)\cap\left(\mathbb{R}^{n}\times\left\{ \v{\eta}_{0}\right\} \right)=\emptyset.
\]
Since $\mathcal{I}_{\v v}w=\mathcal{I}_{\v v}w^{\star}$ on $\left\{ t<t_{0}\right\} $,
we can say that:
\[
WF\left(\mathcal{I}_{\v v}w\right)\cap\left(\left\{ t<t_{0}\right\} \times\left\{ \v{\eta}_{0}\right\} \right)=\emptyset,
\]
and then let $t_{0}\rightarrow\infty$ to obtain:
\[
WF\left(\mathcal{I}_{\v v}w\right)\cap\left(\mathbb{R}^{n}\times\left\{ \v{\eta}_{0}\right\} \right)=\emptyset.\qedhere
\]

\end{proof}
The above result implies that the only way $\mathcal{I}_{\v v}w$
can include an element $\left(\v x_{0},\v{\eta}_{0}\right)\in\mathbb{R}^{n}\times\v v^{\perp}$
in its wavefront set is if $w$ itself already contains some element
of $\mathbb{R}^{n}\times\left\{ \v{\eta}_{0}\right\} $. The following
result further refines the previous result by describing a necessary
condition on $\v x_{0}$ in order for $\mathcal{I}_{\v v}w$ to contain
$\left(\v x_{0},\v{\eta}_{0}\right)$ in its wavefront set. Intuition
tells us that $\left(\v x_{0}-t\v v,\v{\eta}_{0}\right)$ must already
belong to the wavefront set of $w$ for some $t\ge0$.

\newpage{}
\begin{prop}
\label{thm:propagation1}Let $U\subseteq\mathbb{R}^{n}$ be open and
assume $U$ is invariant under translation in the direction $-\v v$,
and $\v{\eta}_{0}\in\v v^{\perp}$. If:
\begin{equation}
WF\left(w\right)\cap\left(U\times\left\{ \v{\eta}_{0}\right\} \right)=\emptyset,\label{eq:cond}
\end{equation}
then:
\[
WF\left(\mathcal{I}_{\v v}w\right)\cap\left(U\times\left\{ \v{\eta}_{0}\right\} \right)=\emptyset.
\]
\end{prop}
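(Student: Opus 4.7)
The plan is to localize at each $\v x_0 \in U$ and reduce to the global wavefront statement of Theorem \ref{thm:mainresult}, using the preceding locality proposition for $\mathcal{I}_{\v v}$ to transfer the conclusion back to $w$. Specifically, I will construct a compactly supported auxiliary $\tilde w = \chi w$ which (i) agrees with $w$ on an open, $-\v v$-translation-invariant neighborhood $V'$ of $\v x_0$ contained in $U$, and (ii) satisfies $WF(\tilde w) \cap (\mathbb{R}^n \times \{\v\eta_0\}) = \emptyset$ globally.

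To build $\chi$, fix $\v x_0 \in U$, pick $r > 0$ with $B(\v x_0, r) \subseteq U$, and form the translation-invariant tubes
\[
V = \bigcup_{t \ge 0} \bigl(B(\v x_0, r) - t\v v\bigr), \qquad V' = \bigcup_{t \ge 0} \bigl(B(\v x_0, r/3) - t\v v\bigr);
\]
both are open in $\mathbb{R}^n$, invariant under $-\v v$-translation, and contained in $U$. Decomposing $\v x = \v u + s\v v$ with $\v u \in \v v^\perp$, I set $\chi(\v u + s\v v) = g(\v u)\,h(s)$, where $g \in \mathcal{C}^\infty(\v v^\perp)$ is a standard bump equal to $1$ in the $(r/3)$-ball around the orthogonal projection of $\v x_0$ onto $\v v^\perp$ and supported in the $(2r/3)$-ball, while $h \in \mathcal{C}^\infty(\mathbb{R})$ is a monotone cutoff equal to $1$ for $s \le \v x_0 \cdot \v v + r/3$ and vanishing for $s \ge \v x_0 \cdot \v v + 2r/3$. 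A direct check based on the inequality $(2r/3)^2 + (2r/3)^2 < r^2$ then verifies $\chi \equiv 1$ on $V'$ and $\mathrm{supp}(\chi) \subseteq V \subseteq U$.

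Setting $\tilde w = \chi w$, compactness of $\mathrm{supp}(w)$ forces $\tilde w \in \mathcal{E}'(\mathbb{R}^n)$, while the elementary inclusion $WF(\tilde w) \subseteq WF(w) \cap (\mathrm{supp}(\chi) \times \mathbb{R}^n)$ together with $\mathrm{supp}(\chi) \subseteq U$ and hypothesis \ref{eq:cond} yields $WF(\tilde w) \cap (\mathbb{R}^n \times \{\v\eta_0\}) = \emptyset$. Theorem \ref{thm:mainresult} applied to $\tilde w$ then produces $WF(\mathcal{I}_{\v v}\tilde w) \cap (\mathbb{R}^n \times \{\v\eta_0\}) = \emptyset$. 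Since $w = \tilde w$ on the open $-\v v$-translation-invariant set $V'$, the preceding locality proposition gives $\mathcal{I}_{\v v}w = \mathcal{I}_{\v v}\tilde w$ on $V'$; locality of the wavefront set then forces $(\v x_0, \v\eta_0) \notin WF(\mathcal{I}_{\v v}w)$, and since $\v x_0 \in U$ was arbitrary, the proposition follows.

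The principal technical obstacle is that every open translation-invariant neighborhood of $\v x_0$ is unbounded in the $-\v v$ direction, so $\chi$ itself cannot be compactly supported and the standard single-bump-cutoff construction is unavailable. The product form $\chi = g \otimes h$ is tailored to this: it leaves the $-\v v$ tail of $\chi$ unrestricted, letting the compactness required by Theorem \ref{thm:mainresult} come entirely from compactness of $\mathrm{supp}(w)$, while still producing a full open translation-invariant slab $\{\chi = 1\}$ around $\v x_0$ on which the locality proposition can be invoked.
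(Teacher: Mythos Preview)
Your proof is correct and follows essentially the same approach as the paper's: multiply $w$ by a smooth cutoff supported in $U$ and identically $1$ on a smaller $-\v v$-translation-invariant open set, apply Theorem~\ref{thm:mainresult} globally to the product, and invoke the locality proposition to transfer the conclusion back to $\mathcal{I}_{\v v}w$ on the smaller set. The only difference is that you construct the cutoff explicitly as the tensor product $g\otimes h$, whereas the paper simply asserts the existence of such a $\psi$ and leaves the construction to the reader.
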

\begin{proof}
Let $U^{\star}$ be an open subset of $U$ whose closure is entirely
contained in $U$, and is also closed under translation in the direction
$-\v v$, and choose a $\mathcal{C}^{\infty}$ function $\psi\ge0$
supported in $U$ that is equal to 1 on $U^{\star}$. Then $\psi w=w$
on $U^{\star}$, so $\mathcal{I}_{\v v}\left(\psi w\right)=\mathcal{I}_{\v v}w$,
and in fact, we also have $\psi\mathcal{I}_{\v v}w=\mathcal{I}_{\v v}w$
on $U^{\star}$. Furthermore, \ref{eq:cond} implies:
\[
WF\left(\psi w\right)\cap\left(\mathbb{R}^{n}\times\left\{ \v{\eta}_{0}\right\} \right)=\emptyset,
\]
and so:
\[
WF\left(\mathcal{I}_{\v v}\left(\psi w\right)\right)\cap\left(\mathbb{R}^{n}\times\left\{ \v{\eta}_{0}\right\} \right)=\emptyset.
\]
We then have:
\begin{flalign*}
 & \hfill & WF\left(\mathcal{I}_{\v v}w\right)\cap\left(U^{\star}\times\left\{ \v{\eta}_{0}\right\} \right) & =WF\left(\mathcal{I}_{\v v}\left(\psi w\right)\right)\cap\left(U^{\star}\times\left\{ \v{\eta}_{0}\right\} \right) & \hfill\\
 &  &  & \subseteq WF\left(\mathcal{I}_{\v v}\left(\psi w\right)\right)\cap\left(\mathbb{R}^{n}\times\left\{ \v{\eta}_{0}\right\} \right)\\
 &  &  & =\emptyset.
\end{flalign*}
Since $U^{\star}$ was arbitrary, and for each $\v x\in U$, we can
find such a $U^{\star}$ containing $\v x$, we can deduce that:
\[
WF\left(\mathcal{I}_{\v v}w\right)\cap\left(U\times\left\{ \v{\eta}_{0}\right\} \right)=\emptyset.\qedhere
\]
\end{proof}
\begin{cor}[Propagation of singularities of the distributional directional antiderivative]
\label{thm:propagation-ray}Let $w\in\mathcal{E}'\left(\mathbb{R}^{n}\right)$,
define:
\begin{equation}
R_{\v v}\left(\v x\right)=\left\{ \v x+t\v v:t\ge0\right\} ,\quad\v x\in\mathbb{R}^{n},\v v\in\mathcal{S}^{n-1},\label{eq:ray}
\end{equation}
 and let:
\[
V_{\v{\eta}_{0}}=\bigcup_{\left(\v x,\v{\eta}_{0}\right)\in WF\left(w\right)}R_{\v v}\left(\v x\right),\quad U_{\v{\eta}_{0}}=V_{\v{\eta}_{0}}^{C},\quad\v{\eta}_{0}\in\v v^{\perp}.
\]
Then:
\begin{flalign}
 & \hfill & WF\left(\mathcal{I}_{\v v}w\right) & \subseteq WF\left(w\right)\cup\bigcup_{\v{\eta}_{0}\in\v v^{\perp}}\left(V_{\v{\eta}_{0}}\times\left\{ \v{\eta}_{0}\right\} \right) & \hfill\nonumber \\
 &  &  & =WF\left(w\right)\cup\left\{ \left(\v x+t\v v,\v{\eta}_{0}\right)\,\middle|\,\left(\v x,\v{\eta}_{0}\right)\in WF\left(w\right),\v{\eta}_{0}\perp\v v,t\ge0\right\} .\label{eq:propagation}
\end{flalign}
\end{cor}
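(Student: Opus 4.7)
The plan is to combine the microlocal property (Theorem \ref{thm:microlocal}) with Proposition \ref{thm:propagation1}. Applying Theorem \ref{thm:microlocal} to $P=\mathcal{D}_{\v v}$ and using the identity $\mathcal{D}_{\v v}\mathcal{I}_{\v v}w=w$ together with $\mathrm{Char}\,\mathcal{D}_{\v v}=\mathbb{R}^{n}\times\v v^{\perp}$, we obtain
\[
WF(\mathcal{I}_{\v v}w)\subseteq WF(w)\cup\bigl(\mathbb{R}^{n}\times\v v^{\perp}\bigr).
\]
Thus every element of $WF(\mathcal{I}_{\v v}w)\setminus WF(w)$ has its second component in $\v v^{\perp}$, and it suffices to show that for each nonzero $\v{\eta}_{0}\in\v v^{\perp}$, the containment $(\v x,\v{\eta}_{0})\in WF(\mathcal{I}_{\v v}w)$ forces $\v x\in V_{\v{\eta}_{0}}$.

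Fix such $\v{\eta}_{0}$ and set $A=\{\v x\in\mathbb{R}^{n}:(\v x,\v{\eta}_{0})\in WF(w)\}$, so that $V_{\v{\eta}_{0}}=A+[0,\infty)\v v$. Because $w\in\mathcal{E}'(\mathbb{R}^{n})$ and $WF(w)$ is closed, $A$ is a closed subset of the compact set $\mathrm{supp}\,w$ and is therefore compact. I would then verify that $V_{\v{\eta}_{0}}$ is closed by a standard subsequence argument: if $\v a_{k}+t_{k}\v v\to\v y$ with $\v a_{k}\in A$ and $t_{k}\ge0$, compactness of $A$ extracts $\v a_{k}\to\v a\in A$ along a subsequence, whence $t_{k}\to t\ge0$ and $\v y=\v a+t\v v\in V_{\v{\eta}_{0}}$. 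Consequently $U_{\v{\eta}_{0}}$ is open, and it is invariant under translation by $-t\v v$ for $t\ge0$: if $\v x\in U_{\v{\eta}_{0}}$ but $\v x-t\v v=\v a+s\v v$ with $\v a\in A$ and $s\ge0$, then $\v x=\v a+(s+t)\v v\in V_{\v{\eta}_{0}}$, contradicting $\v x\in U_{\v{\eta}_{0}}$.

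With these properties of $U_{\v{\eta}_{0}}$ in hand, Proposition \ref{thm:propagation1} applies: $U_{\v{\eta}_{0}}$ is open and $-\v v$-invariant, and $WF(w)\cap(U_{\v{\eta}_{0}}\times\{\v{\eta}_{0}\})=\emptyset$ because $A\subseteq V_{\v{\eta}_{0}}$. The proposition then yields $WF(\mathcal{I}_{\v v}w)\cap(U_{\v{\eta}_{0}}\times\{\v{\eta}_{0}\})=\emptyset$, which is exactly the implication we sought for the direction $\v{\eta}_{0}$. Combining with the microlocal reduction gives the inclusion \ref{eq:propagation}.

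The only substantive obstacle is the topological step of showing $V_{\v{\eta}_{0}}$ is closed, so that $U_{\v{\eta}_{0}}$ qualifies as an admissible open set for Proposition \ref{thm:propagation1}. Compactness of $A$---which follows from $w\in\mathcal{E}'(\mathbb{R}^{n})$---is essential here: without it, the Minkowski sum $A+[0,\infty)\v v$ need not be closed, and the argument breaks down. Beyond this, the proof is a direct appeal to the two earlier results.
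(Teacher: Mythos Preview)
Your proof is correct and follows essentially the same approach as the paper: first invoke the microlocal property for $\mathcal{D}_{\v v}$ via $\mathcal{D}_{\v v}\mathcal{I}_{\v v}w=w$ to reduce to codirections in $\v v^{\perp}$, then for each $\v{\eta}_{0}\in\v v^{\perp}$ apply Proposition~\ref{thm:propagation1} to the open, $-\v v$-invariant set $U_{\v{\eta}_{0}}$. Your treatment is in fact slightly more detailed than the paper's, which asserts closedness of $V_{\v{\eta}_{0}}$ and $-\v v$-invariance of $U_{\v{\eta}_{0}}$ without the explicit verifications you provide.
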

\begin{proof}
Since $\mathcal{D}_{\v v}\mathcal{I}_{\v v}w=w$, we can already narrow
down $WF\left(\mathcal{I}_{\v v}w\right)$ to:
\begin{equation}
WF\left(\mathcal{I}_{\v v}w\right)\subseteq WF\left(w\right)\cup\left(\mathbb{R}^{n}\times\v v^{\perp}\right).\label{eq:xavierresult}
\end{equation}
We want to be able to replace $\mathbb{R}^{n}\times\v v^{\perp}$
with $\bigcup_{\v{\eta}_{0}\in\v v^{\perp}}\left(V_{\v{\eta}_{0}}\times\left\{ \v{\eta}_{0}\right\} \right)$.

We next observe that for each $\v{\eta}_{0}\in\v v^{\perp}$, since
$w$ is compactly-supported, $V_{\v{\eta}_{0}}$ must be closed. Then
$U_{\v{\eta}_{0}}$ is an open set that is invariant under translation
in the direction of $-\v v$, and:
\[
WF\left(w\right)\cap\left(U_{\v{\eta}_{0}}\times\left\{ \v{\eta}_{0}\right\} \right)=\emptyset,
\]
and so:
\begin{equation}
WF\left(\mathcal{I}_{\v v}w\right)\cap\left(U_{\v{\eta}_{0}}\times\left\{ \v{\eta}_{0}\right\} \right)=\emptyset.\label{eq:WFIw}
\end{equation}
Therefore, if $\left(\v x_{0},\v{\eta}_{0}\right)\in WF\left(\mathcal{I}_{\v v}w\right)$,
but $\left(\v x_{0},\v{\eta}_{0}\right)\notin WF\left(w\right)$,
then \ref{eq:xavierresult} indicates that $\v{\eta}_{0}\in\v v^{\perp}$,
and then \ref{eq:WFIw} would require that $\v x\notin U_{\v{\eta}_{0}}$,
and so $\left(\v x_{0},\v{\xi}_{0}\right)\in V_{\v{\eta}_{0}}\times\left\{ \v{\eta}_{0}\right\} $.

Note that if we lift the restriction of compact support on $w$, we
must instead use:
\[
V_{\v{\eta}_{0}}=\overline{\bigcup_{\left(\v x,\v{\eta}_{0}\right)\in WF\left(w\right)}R_{\v v}\left(\v x\right)}.\qedhere
\]

\end{proof}
We now strengthen \ref{thm:propagation1} by with the next result.
Intuitively, even if $\left(\v x_{0},\v{\eta}_{0}\right)\in WF\left(\mathcal{I}_{\v v}w\right)$,
if $WF\left(\mathcal{I}_{\v v}w\right)$ omits some $\left(\v x_{0}+t_{0}\v v,\v{\eta}_{0}\right)$,
for some $t_{0}>0$, then the only way for $WF\left(\mathcal{I}_{\v v}w\right)$
to pick up any more elements of the form $\left(\v x_{0}+t\v v,\v{\eta}_{0}\right)$
for $t>t_{0}$ is for $WF\left(w\right)$ to contain some $\left(\v x_{0}+t_{1}\v v,\v{\eta}_{0}\right)$,
for some $t_{1}>t_{0}$.
\begin{prop}
Let $w\in\mathcal{E}'\left(\mathbb{R}^{n}\right)$, $\v{\eta}_{0}\in\v v^{\perp}$,
and $U_{0}$ be a bounded open set, $t_{1}>0$, and 
\[
U_{t}=U+t\v v,\quad t\in\mathbb{R},
\]
\[
U=\bigcup_{0\le t\le t_{1}}U_{t}.
\]
If:
\[
WF\left(\mathcal{I}_{\v v}w\right)\cap\left(U_{0}\times\left\{ \v{\eta}_{0}\right\} \right)=\emptyset,
\]
and:
\[
WF\left(w\right)\cap\left(U\times\left\{ \v{\eta}_{0}\right\} \right)=\emptyset,
\]
then:
\[
WF\left(\mathcal{I}_{\v v}w\right)\cap\left(U\times\left\{ \v{\eta}_{0}\right\} \right)=\emptyset.
\]
\end{prop}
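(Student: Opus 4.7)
Fix $\v{x}_{0}\in U$ and write $\v{x}_{0}=\v{z}_{0}+s_{0}\v v$ with $\v{z}_{0}\in U_{0}$ and $s_{0}\in\left[0,t_{1}\right]$; it suffices to show $\left(\v{x}_{0},\v{\eta}_{0}\right)\notin WF\left(\mathcal{I}_{\v v}w\right)$. The plan is to work on a small $\v v$-convex slab $\tilde{U}=V_{0}+\left[0,t_{1}\right]\v v$ containing $\v{x}_{0}$, where $V_{0}$ is an open neighborhood of $\v{z}_{0}$ with $\overline{V_{0}}\subset U_{0}$ (so $\overline{\tilde{U}}$ is compact in $U$), to split $w$ relative to this slab, apply \ref{thm:mainresult} to one piece, and use that the other piece is annihilated by $\mathcal{D}_{\v v}$ on $\tilde{U}$ to force its antiderivative to be $\v v$-translation invariant there.

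Choose $\psi\in\mathcal{C}_{0}^{\infty}\left(\mathbb{R}^{n}\right)$ with $\psi=1$ on a neighborhood of $\overline{\tilde{U}}$ and $\mathrm{supp}\,\psi$ compact in $U$. Set $\tilde{w}=\psi w$, $w_{\ast}=\left(1-\psi\right)w$, and $g=\mathcal{I}_{\v v}w_{\ast}=\mathcal{I}_{\v v}w-\mathcal{I}_{\v v}\tilde{w}$. Since $\mathrm{supp}\,\psi\subseteq U$ and $WF\left(w\right)\cap\left(U\times\left\{ \v{\eta}_{0}\right\} \right)=\emptyset$, we have $WF\left(\tilde{w}\right)\cap\left(\mathbb{R}^{n}\times\left\{ \v{\eta}_{0}\right\} \right)=\emptyset$, and \ref{thm:mainresult} yields $WF\left(\mathcal{I}_{\v v}\tilde{w}\right)\cap\left(\mathbb{R}^{n}\times\left\{ \v{\eta}_{0}\right\} \right)=\emptyset$. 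On $\tilde{U}$, $w_{\ast}=0$, so $\mathcal{D}_{\v v}g=0$ on $\tilde{U}$; since $\tilde{U}$ is $\v v$-convex, $g|_{\tilde{U}}$ is $\v v$-translation invariant, equivalently $g\left(\v u+t\v v\right)=h\left(\v u\right)$ on $\tilde{U}$ for some distribution $h$ on the image of $\tilde{U}$ under the orthogonal projection $p:\mathbb{R}^{n}\rightarrow\v v^{\perp}$.

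Consequently $WF\left(g\right)\cap\left(\tilde{U}\times\mathbb{R}^{n}\right)$ is invariant under $\v v$-translations within $\tilde{U}$. At $\v{z}_{0}\in V_{0}\subseteq U_{0}\cap\tilde{U}$, the first hypothesis gives $\left(\v{z}_{0},\v{\eta}_{0}\right)\notin WF\left(\mathcal{I}_{\v v}w\right)$, and the previous step gives $\left(\v{z}_{0},\v{\eta}_{0}\right)\notin WF\left(\mathcal{I}_{\v v}\tilde{w}\right)$, so $\left(\v{z}_{0},\v{\eta}_{0}\right)\notin WF\left(g\right)$. Translating by $s_{0}\v v$ within $\tilde{U}$ yields $\left(\v{x}_{0},\v{\eta}_{0}\right)\notin WF\left(g\right)$, and combined with $\left(\v{x}_{0},\v{\eta}_{0}\right)\notin WF\left(\mathcal{I}_{\v v}\tilde{w}\right)$ this gives $\left(\v{x}_{0},\v{\eta}_{0}\right)\notin WF\left(\mathcal{I}_{\v v}w\right)$; since $\v{x}_{0}\in U$ was arbitrary, the proof is complete.

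The main obstacle I expect is the rigorous justification of the $\v v$-translation invariance of $WF\left(g\right)$ on $\tilde{U}$. This can be handled by a direct Fourier-phase computation: for $\phi\in\mathcal{C}_{0}^{\infty}$ supported in a small ball around $\v{z}_{0}$ inside $\tilde{U}$, setting $\phi_{s}\left(\v y\right)=\phi\left(\v y-s\v v\right)$, the invariance $g\left(\v y\right)=g\left(\v y-s\v v\right)$ on $\tilde{U}$ gives $\phi_{s}g=T_{s}\left(\phi g\right)$ (with $T_{s}$ denoting translation by $s\v v$), whence $\left|\widehat{\phi_{s}g}\left(\v{\xi}\right)\right|=\left|\widehat{\phi g}\left(\v{\xi}\right)\right|$, so rapid decay in a conic neighborhood of $\v{\eta}_{0}$ at $\v{z}_{0}$ transfers to the same decay at $\v{z}_{0}+s\v v$.
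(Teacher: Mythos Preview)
Your proof is correct and takes a genuinely different route from the paper's. The paper cuts off the \emph{antiderivative}: it picks $\psi\in\mathcal{C}_{0}^{\infty}(U)$ equal to $1$ on a slightly smaller slab $U^{\star}$, computes
\[
\mathcal{D}_{\v v}\bigl(\psi\,\mathcal{I}_{\v v}w\bigr)=\mathcal{D}_{\v v}\psi\cdot\mathcal{I}_{\v v}w+\psi\, w,
\]
uses both hypotheses to show this derivative has no $\v{\eta}_{0}$-wavefront over $U_{0}\cup U^{\star}$, enlarges that set to one invariant under translation by $-\v v$ (without meeting $\mathrm{supp}\,\psi$), and then applies Proposition~\ref{thm:propagation1} to $\psi\,\mathcal{I}_{\v v}w$. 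You instead cut off $w$ itself: the piece $\psi w$ is handled globally by Theorem~\ref{thm:mainresult}, while the remainder $(1-\psi)w$ vanishes on the slab, so its antiderivative $g$ satisfies $\mathcal{D}_{\v v}g=0$ there and is $\v v$-translation invariant on $\tilde{U}$, letting you transport the first hypothesis from $\v z_{0}\in U_{0}$ to $\v x_{0}$.

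Both arguments ultimately rest on Theorem~\ref{thm:mainresult}, but the paper's version stays inside the $\mathcal{I}_{\v v}$-calculus already built (cutoff, differentiate, re-integrate via Proposition~\ref{thm:propagation1}), whereas yours isolates the elementary structural fact that a distribution annihilated by $\mathcal{D}_{\v v}$ on a $\v v$-convex open set is $\v v$-independent there, making the propagation step transparent. Your final paragraph justifying translation-invariance of $WF(g)$ via $\bigl|\widehat{\phi_{s}g}\bigr|=\bigl|\widehat{\phi g}\bigr|$ is correct; you could also simply note that $g\mapsto g(\cdot+s\v v)$ is a pullback by a diffeomorphism with $\mathcal{D}\chi^{T}=I$, so it preserves the wavefront set pointwise.
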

\begin{proof}
We may assume without loss of generality that $U_{0}$ is convex,
otherwise, we can apply the following argument to every convex open
subset of $U_{0}$. Let $U_{0}^{\star}$ be an open set whose closure
is contained in $U_{0}$, then define $U^{\star}$ in much the same
way as $U$. Now let $\psi\in\mathcal{C}_{0}^{\infty}\left(U\right)$
be equal to 1 on $U^{\star}$. Consider the distributional partial
derivative:
\[
\mathcal{D}_{\v v}\left(\psi\cdot\mathcal{I}_{\v v}w\right)=\mathcal{D}_{\v v}\psi\cdot\mathcal{I}_{\v v}w+\psi\cdot w.
\]
Since $\mathcal{D}_{\v v}\psi$ vanishes on $U^{\star}$, we have
$\mathcal{D}_{\v v}\left(\psi\cdot\mathcal{I}_{\v v}w\right)=w$ on
$U^{\star}$, and since:
\[
WF\left(w\right)\cap\left(U^{\star}\times\left\{ \v{\eta}_{0}\right\} \right)\subseteq WF\left(w\right)\cap\left(U\times\left\{ \v{\eta}_{0}\right\} \right)=\emptyset,
\]
we have:
\[
WF\left(\mathcal{D}_{\v v}\left(\psi\cdot\mathcal{I}_{\v v}w\right)\right)\cap\left(U^{\star}\times\left\{ \v{\eta}_{0}\right\} \right)=\emptyset.
\]
Furthermore, since:
\[
WF\left(\mathcal{I}_{\v v}w\right)\cap\left(U_{0}\times\left\{ \v{\eta}_{0}\right\} \right)=\emptyset,
\]
it must also follow that:
\[
WF\left(\mathcal{D}_{\v v}\left(\psi\cdot\mathcal{I}_{\v v}w\right)\right)\cap\left(U_{0}\times\left\{ \v{\eta}_{0}\right\} \right)=\emptyset,
\]
and so:
\[
WF\left(\mathcal{D}_{\v v}\left(\psi\cdot\mathcal{I}_{\v v}w\right)\right)\cap\left(\left(U_{0}\cup U^{\star}\right)\times\left\{ \v{\eta}_{0}\right\} \right)=\emptyset.
\]
We can replace $U_{0}\cup U^{\star}$ with $\bigcup_{t\le0}U_{t}\cup U^{\star}$,
as that introduces no points that are inside the support of $\psi$
(hence the requirement that $U_{0}$ be convex), and is also invariant
under translation in the direction $-\v v$, and so we again apply
\ref{thm:mainresult} to obtain:
\[
WF\left(\psi\cdot\mathcal{I}_{\v v}w\right)\cap\left(\left(U_{0}\cup U^{\star}\right)\times\left\{ \v{\eta}_{0}\right\} \right)=\emptyset.
\]
In particular, 
\[
WF\left(\mathcal{I}_{\v v}w\right)\cap\left(U^{\star}\times\left\{ \v{\eta}_{0}\right\} \right)=\emptyset,
\]
and since $U_{0}^{\star}$ was arbitrary, we can replace $U^{\star}$
with $U$:
\[
WF\left(\mathcal{I}_{\v v}w\right)\cap\left(U\times\left\{ \v{\eta}_{0}\right\} \right)=\emptyset.\qedhere
\]

\end{proof}
We now wish to extend the distributional directional antiderivative
further by replacing the support condition \ref{eq:support-condition}
with an even weaker condition, that there exists a $t_{\min}\in\mathcal{C}^{\infty}\left(\v v^{\perp}\right)$
such that
\begin{equation}
\mathrm{supp}\, w\subseteq\left\{ \v u+t\v v\,\middle|\,\v u\in\v v^{\perp},t>t_{\min}\left(\v u\right)\right\} .\label{eq:support-condition-2}
\end{equation}
Notice this includes the previous support condition by considering
the case that $t_{\min}$is a constant. If we let $\chi\left(\v x\right)=\v x-t_{\min}\left(\v u\right)\v v$,
then the pullback $\chi^{\star}w$ has the support condition $\inf_{\v x\in\mathrm{supp}\,\chi^{\star}w}\v x\cdot\v v>0$,
and so we can define $\mathcal{I}_{\v v}w$ by conjugating $\mathcal{I}_{\v v}$
with the pullback map $\chi^{\star}$. We will want to be sure that
this does not change $\mathcal{I}_{\v v}w$, however.
\begin{prop}
Let $w$ satisfy \ref{eq:support-condition}, and define $\chi\left(\v x\right)=\v x-t_{\min}\v v$.
Then $\chi^{-\star}\mathcal{I}_{\v v}\chi^{\star}w=\mathcal{I}_{\v v}w$.\end{prop}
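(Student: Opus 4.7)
The plan is to unfold both sides as pairings against a Schwartz test function and reduce the claim to an explicit formula for how $\mathcal{I}_{\v v}$ interacts with translation along $\v v$, after which the remaining discrepancy vanishes against $w$ by a support argument.

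First I would note that $\chi^{\star}w$, being obtained from $w$ by translation by $+t_{\min}\v v$, satisfies \ref{eq:support-condition} with threshold $2t_{\min}$ in place of $t_{\min}$. By the earlier independence proposition I am free to pick a single $\psi_{0}$ with $\mathrm{supp}\,\psi_{0}\subseteq(-\infty,\min\{t_{\min},2t_{\min}\})$ admissible for defining both $\mathcal{I}_{\v v}w$ and $\mathcal{I}_{\v v}\chi^{\star}w$. Since $\chi(\v x)=\v x-t_{\min}\v v$ is a translation, the pullback acts by $\left\langle \chi^{\star}u,\phi\right\rangle =\left\langle u,\phi(\cdot+t_{\min}\v v)\right\rangle $ and symmetrically for $\chi^{-\star}$, so unrolling yields
\[
\left\langle \chi^{-\star}\mathcal{I}_{\v v}\chi^{\star}w,\phi\right\rangle =-\bigl\langle w,\,\mathcal{I}_{\v v}[\phi(\cdot-t_{\min}\v v)](\cdot+t_{\min}\v v)\bigr\rangle,
\]
while $\left\langle \mathcal{I}_{\v v}w,\phi\right\rangle =-\left\langle w,\mathcal{I}_{\v v}\phi\right\rangle $. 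It therefore suffices to verify that the test function
\[
\Delta(\v u+t\v v):=\mathcal{I}_{\v v}[\phi(\cdot-t_{\min}\v v)](\v u+(t+t_{\min})\v v)-\mathcal{I}_{\v v}\phi(\v u+t\v v)
\]
vanishes on $\mathrm{supp}\,w$.

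The substitution $r'=r-t_{\min}$ in the integral defining $\mathcal{I}_{\v v}$, combined with translation invariance of the x-ray transform $\mathcal{X}_{\v v}[\phi(\cdot-t_{\min}\v v)]=\mathcal{X}_{\v v}\phi$, collapses $\Delta$ to the explicit expression
\[
\Delta(\v u+t\v v)=-\mathcal{X}_{\v v}\phi(\v u)\int_{t}^{t+t_{\min}}\psi_{0}(r)\,dr.
\]
A short case analysis on the sign of $t_{\min}$ shows that whenever $t\ge t_{\min}$, the interval of integration has its lower endpoint at or above $\min\{t_{\min},2t_{\min}\}$, hence is disjoint from $\mathrm{supp}\,\psi_{0}$; therefore $\Delta$ vanishes on $\mathrm{supp}\,w\subseteq\{\v u+t\v v:t\ge t_{\min}\}$, and the pairing with $w$ is zero. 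The main obstacle is purely bookkeeping---keeping the pullback sign conventions straight and verifying that the common $\psi_{0}$ is simultaneously admissible on both sides---after which the identity reduces to a one-line substitution and no analytic subtlety arises.
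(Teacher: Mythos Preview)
Your argument is correct and follows essentially the same route as the paper: unfold the pairing, apply the substitution $r\mapsto r-t_{\min}$, use $\mathcal{X}_{\v v}\chi^{\star}\phi=\mathcal{X}_{\v v}\phi$, and finish by a support comparison between $\psi_{0}$ and $w$. The only cosmetic difference is that the paper carries $\psi_{0}$ through the substitution (arriving at $\mathcal{I}_{\v v}\phi$ written with the shifted bump $\psi_{0}(\cdot+t_{\min})$ and then implicitly invoking the bump-independence proposition), whereas you fix a single admissible bump from the outset and isolate the residual $\Delta$ directly; your case analysis on the sign of $t_{\min}$ is in fact more careful than the paper's choice $\psi_{0}\in\mathcal{C}_{0}^{\infty}(\mathbb{R}^{-})$.
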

\begin{proof}
It is important to note that $\mathcal{X}_{\v v}\chi^{\star}=\mathcal{X}_{\v v}$.
For some $\psi_{0}\in\mathcal{C}_{0}^{\infty}\left(\mathbb{R}^{-}\right)$,
we have
\begin{flalign*}
 & \hfill & \left\langle \chi^{-\star}\mathcal{I}_{\v v}\chi^{\star}w,\phi\right\rangle  & =-\left\langle \chi^{\star}w,\mathcal{I}_{\v v}\chi^{\star}\phi\right\rangle  & \hfill\\
 &  &  & =-\left\langle \chi^{\star}w,\int_{-\infty}^{t}\chi^{\star}\phi\left(\v u+s\v v\right)-\mathcal{X}_{\v v}\left(\chi^{\star}\phi\right)\otimes\psi_{0}\left(\v u+s\v v\right)\, ds\right\rangle \\
 &  &  & =-\left\langle \chi^{\star}w,\int_{-\infty}^{t}\phi\left(\v u+\left(s-t_{\min}\right)\v v\right)-\mathcal{X}_{\v v}\phi\otimes\psi_{0}\left(\v u+s\v v\right)\, ds\right\rangle \\
 &  &  & =-\left\langle \chi^{\star}w,\int_{-\infty}^{t-t_{\min}}\phi\left(\v u+s\v v\right)-\mathcal{X}_{\v v}\phi\otimes\psi_{0}\left(\v u+\left(s+t_{\min}\right)\v v\right)\, ds\right\rangle \\
 &  &  & =-\left\langle \chi^{\star}w,\int_{-\infty}^{t-t_{\min}}\phi\left(\v u+s\v v\right)-\mathcal{X}_{\v v}\phi\otimes\chi^{-\star}\psi_{0}\left(\v u+s\v v\right)\, ds\right\rangle \\
 &  &  & =-\left\langle \chi^{\star}w,\mathcal{I}_{\v v}\phi\left(\v u+\left(t-t_{\min}\right)\v v\right)\right\rangle \\
 &  &  & =-\left\langle w,\chi^{-\star}\left\{ \mathcal{I}_{\v v}\phi\left(\v u+\left(t-t_{\min}\right)\v v\right)\right\} \right\rangle \\
 &  &  & =-\left\langle w,\mathcal{I}_{\v v}\phi\left(\v u+t\v v\right)\right\rangle \\
 &  &  & =\left\langle \mathcal{I}_{\v v}w,\phi\right\rangle ,
\end{flalign*}
where $\chi^{-\star}\psi_{0}\left(t\right)=\psi_{0}\left(t+t_{\min}\right)$.
\end{proof}
Thus, we may define $\mathcal{I}_{\v v}w$ when $w$ satisfies the
weaker support condition as follows:
\begin{defn}
\label{def:Ivw2}Let $w\in\mathscr{D}'\left(\mathbb{R}^{n}\right)$
have the support condition \ref{eq:support-condition-2}. Define
\[
\mathcal{I}_{\v v}w=\chi^{-\star}\mathcal{I}_{\v v}\chi^{\star}w,
\]
where $\chi\left(\v x\right)=\v x-t_{\min}\left(\v u\right)\v v$.
\end{defn}
We can also verify that this definition is independent of the choice
of $\chi$ so long as \ref{eq:support-condition-2} is satisfied.
We will omit this proof as it would proceed in a fashion similar to
the above computation.

We also wish to show \ref{thm:propagation-ray} also applies to this
extension of $\mathcal{I}_{\v v}$.
\begin{proof}
Notice that
\[
\mathcal{D}\chi=\begin{bmatrix}I_{\v v^{\perp}} & \v 0\\
\mathcal{D}t_{\min} & 1
\end{bmatrix},
\]
which implies that $\mathcal{D}\chi^{T}$ and $\mathcal{D}\chi^{-T}$
both fix $\v v^{\perp}$. Thus,

\begin{flalign*}
 & \hfill & WF\left(\mathcal{I}_{\v v}w\right) & =WF\left(\chi^{-\star}\mathcal{I}_{\v v}\chi^{\star}w\right) & \hfill\\
 &  &  & =\chi^{-\star}WF\left(\mathcal{I}_{\v v}\chi^{\star}w\right)\\
 &  &  & \subseteq\chi^{-\star}\left(WF\left(\chi^{\star}w\right)\cup\left\{ \left(\v x+t\v v,\v{\eta}_{0}\right)\,\middle|\,\right.\right.\\
 &  &  & \hspace{3em}\left.\left.\left(\v x,\v{\eta}_{0}\right)\in WF\left(\chi^{\star}w\right),\v{\eta}_{0}\perp\v v,t\ge0\right\} \right)\\
 &  &  & =WF\left(w\right)\cup\chi^{-\star}\left\{ \left(\v x+t\v v,\v{\eta}_{0}\right)\,\middle|\,\right.\\
 &  &  & \hspace{3em}\left.\left(\v x,\v{\eta}_{0}\right)\in WF\left(\chi^{\star}w\right),\v{\eta}_{0}\perp\v v,t\ge0\right\} \\
 &  &  & =WF\left(w\right)\cup\left\{ \left(\chi^{-1}\left(\v x+t\v v\right),\v{\eta}_{0}\right)\,\middle|\,\right.\\
 &  &  & \hspace{3em}\left.\left(\chi\left(\v x\right),\v{\eta}_{0}\right)\in WF\left(w\right),\v{\eta}_{0}\perp\v v,t\ge0\right\} \\
 &  &  & =WF\left(w\right)\cup\left\{ \left(\v x+t\v v,\v{\eta}_{0}\right)\,\middle|\,\left(\v x,\v{\eta}_{0}\right)\in WF\left(w\right),\v{\eta}_{0}\perp\v v,t\ge0\right\} . & \qedhere
\end{flalign*}
\end{proof}
\begin{defn}
\label{def:DistIntegral-Notation}We will use the more familiar notation:
\begin{equation}
\int_{0}^{\infty}w\left(\v x-t\v v\right)\, dt\label{eq:DistIntegral}
\end{equation}
to refer to $\mathcal{I}_{\v v}w$.\end{defn}

\section{General line integrals}

Now that we have given meaning to the integral \ref{eq:DistIntegral},
we now wish to give meaning to the following integral:
\begin{equation}
\int_{0}^{\infty}w\left(\v x-\v{\gamma}\left(t\right)\right)\upsilon\left(t\right)\, dt,\label{eq:curveintegral}
\end{equation}
given a $\v{\gamma}\in\mathcal{C}^{\infty}\left(\left(-\varepsilon,\infty\right);\mathbb{R}^{n}\right)$
and positive-valued $\upsilon\in\mathcal{C}^{\infty}\left(\left(-\varepsilon,\infty\right),\mathbb{R}\right)$,
for some $\varepsilon>0$, with $\v{\gamma}\left(0\right)=\v 0$,
and $\v{\gamma}'\left(t\right)\ne0$ for all $t>-\varepsilon$. It
will also be necessary to impose a support condition that $w\left(\v x-\gamma\left(t\right)\right)$
has bounded support in the variable $t$ to avoid a pathological choice
of $\v{\gamma}$, e.g, a choice of $\v{\gamma}$, that given some
$w\in\mathcal{L}^{1}\left(\mathbb{R}^{n}\right)$, the above integral
may fail to converge for $\v x$ in some open set.

We observe that in the case that $w\in\mathcal{L}^{1}\left(\mathbb{R}^{n}\right)$,
the above integral can be interpreted as
\[
\left.\int_{0}^{\infty}w\left(\v x-\v{\gamma}\left(y+t\right)\right)\, dt\right|_{y=0}.
\]
The notation $w\left(\v x-\v{\gamma}\left(y\right)\right)$ refers
to pulling back $w$ by the map $\chi\left(\v x,y\right)=\v x-\v{\gamma}\left(y\right)$.
We then compute the distributional antiderivative in the direction
$\left(\v 0,-1\right)$, the direction corresponding to the negative
$y$-axis. This antiderivative is then pulled back by the map
\[
\psi_{0}\left(\v x\right)=\left(\v x,0\right).
\]

For ease of notation, we will specialize to the case $\v{\gamma}\left(0\right)=\v 0$.
A result for the general case can be achieved via translations.
\begin{defn}
\label{thm:curveintegral}Let $w$ be a distribution in $\mathbb{R}^{n}$,
and $\v{\gamma}\in\mathcal{C}^{\infty}\left(\left(-\varepsilon,\infty\right);\mathbb{R}^{n}\right)$
a curve for some $\varepsilon>0$, with $\v{\gamma}\left(0\right)=\v 0$
and $\v{\gamma}'\left(t\right)\ne0$, and assume the pullback
\[
\chi^{\star}w=w\left(\v x-\v{\gamma}\left(y\right)\right)
\]
has support bounded in $y$. Then the integral:
\[
\int_{0}^{\infty}w\left(\v x-\v{\gamma}\left(t\right)\right)\, dt,
\]
is defined as:
\[
\left.\int_{0}^{\infty}w\left(\v x-\v{\gamma}\left(y+t\right)\right)\, dt\right|_{y=0}.
\]
That is, 
\[
\int_{0}^{\infty}w\left(\v x-\v{\gamma}\left(t\right)\right)\, dt=\psi_{0}^{\star}\mathcal{I}_{\left(\v 0,-1\right)}\left(\chi^{\star}w\right).
\]

If additionally, $\upsilon\in\mathcal{C}^{\infty}\left(\left(-\varepsilon,\infty\right)\right)$
is a positive-valued weight function, we can then define:
\begin{flalign}
 &  & \int_{0}^{\infty}w\left(\v x-\v{\gamma}\left(t\right)\right)\upsilon\left(t\right)\, dt & =\left.\int_{0}^{\infty}w\left(\v x-\v{\gamma}\left(y+t\right)\right)\upsilon\left(y+t\right)\, dt\right|_{y=0}\label{eq:LineIntegralWeighted}\\
 &  &  & =\psi_{0}^{\star}\mathcal{I}_{\left(\v 0,-1\right)}\left(\left(\chi^{\star}w\right)\upsilon\right).\nonumber 
\end{flalign}

\end{defn}
We then extend \ref{thm:propagation-ray} to a more general result
for the integral \ref{eq:curveintegral}.
\begin{thm}
\label{thm:propagation-curve}If $w\in\mathcal{E}'\left(\mathbb{R}^{n}\right)$
is a distribution such that the support in $y$ of $w\left(\v x-\v{\gamma}\left(y\right)\right)$
is bounded, then:
\begin{flalign*}
 & \hfill &  & WF\left\{ \int_{0}^{\infty}w\left(\v x-\v{\gamma}\left(t\right)\right)\upsilon\left(t\right)\, dt\right\} \backslash WF\left(w\right) & \hfill\\
 &  &  & \hspace{3em}\subseteq\left\{ \left(\v x,\v{\xi}\right)\,\middle|\,\exists t\ge0:\v{\xi}\perp\gamma'\left(t\right)\And\left(\v x-\v{\gamma}\left(t\right),\v{\xi}\right)\in WF\left(w\right)\right\} .
\end{flalign*}
\end{thm}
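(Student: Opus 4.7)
The plan is to work from Definition~\ref{thm:curveintegral}, which writes the line integral as $\psi_{0}^{\star}\mathcal{I}_{(\v 0,-1)}\left((\chi^{\star}w)\upsilon\right)$, and propagate wavefront sets through each operation, invoking \ref{thm:propagation-ray} to handle the antiderivative and the standard submersion/embedding rules for the two pullbacks.

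First, I would compute $WF(\chi^{\star}w)$ where $\chi(\v x,y)=\v x-\v{\gamma}(y)$. The Jacobian $\mathcal{D}\chi=[I_{n}\mid -\v{\gamma}'(y)]$ has full rank $n$, so $\chi$ is a submersion and $\chi^{\star}w$ is well-defined. Moreover, compact support of $w$ together with the bounded $y$-support hypothesis makes $\chi^{\star}w\in\mathcal{E}'(\mathbb{R}^{n+1})$. The pullback formula for wavefront sets gives
\[
WF(\chi^{\star}w)\subseteq\bigl\{\bigl((\v x,y),(\v{\xi},-\v{\gamma}'(y)\cdot\v{\xi})\bigr):(\v x-\v{\gamma}(y),\v{\xi})\in WF(w)\bigr\},
\]
and multiplication by the smooth positive weight $\upsilon$ does not enlarge this set.

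Next, I apply \ref{thm:propagation-ray} to $(\chi^{\star}w)\upsilon$ in the direction $\v v=(\v 0,-1)$. Here $\v v^{\perp}=\{(\v{\eta},\tau):\tau=0\}$, so the perpendicularity condition in \ref{thm:propagation-ray} forces $\tau=0$. Combined with the formula $\tau=-\v{\gamma}'(y)\cdot\v{\xi}$ above, this is exactly the condition $\v{\xi}\perp\v{\gamma}'(y)$. The ray-propagated elements therefore have the form
\[
\bigl((\v x,y-s),(\v{\xi},0)\bigr),\quad s\ge0,\ (\v x-\v{\gamma}(y),\v{\xi})\in WF(w),\ \v{\xi}\perp\v{\gamma}'(y).
\]

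Finally, I pull back by the embedding $\psi_{0}:\v x\mapsto(\v x,0)$. The pullback is well-defined because the conormal to its image at $(\v x,0)$ is $\{(\v 0,\tau)\}$, and our computed wavefront set contains no such covectors (the $\v x$-component $\v{\xi}$ of every covector is nonzero since $(\v x-\v{\gamma}(y),\v{\xi})\in WF(w)$). The pullback formula sends $((\v x,0),(\v{\eta},\tau))\mapsto(\v x,\v{\eta})$. Elements coming from $WF(\chi^{\star}w)$ at $y=0$ contribute pairs $(\v x,\v{\xi})$ with $(\v x,\v{\xi})\in WF(w)$, which lie in $WF(w)$ and so are excised on the left-hand side of the theorem. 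Ray-propagated elements require $y-s=0$, i.e.\ $s=y\ge0$, yielding precisely the stated condition: there exists $t=y\ge0$ with $(\v x-\v{\gamma}(t),\v{\xi})\in WF(w)$ and $\v{\xi}\perp\v{\gamma}'(t)$.

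The main obstacle is bookkeeping: correctly matching the $\tau=0$ perpendicularity condition from \ref{thm:propagation-ray} against the $\tau=-\v{\gamma}'(y)\cdot\v{\xi}$ relation forced by the pullback of $w$, and then aligning the ray parameter $s$ with the evaluation $y=0$ to recover the curve parameter $t$. Once those identifications are made explicitly, the containment is immediate.
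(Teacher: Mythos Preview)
Your proposal is correct and follows essentially the same three-step strategy as the paper: compute $WF(\chi^{\star}w)$ via the submersion $\chi$, apply \ref{thm:propagation-ray} in the direction $(\v 0,-1)$, and then pull back by the embedding $\psi_{0}$, checking the conormal condition along the way. You are in fact more economical than the paper, which inserts an extra argument (via auxiliary maps $\psi_{y_{0}}$) to upgrade the inclusion $WF(\chi^{\star}w)\subseteq\chi^{\star}WF(w)$ to an equality; as you implicitly recognize, only the inclusion is needed for the stated containment, so that step can be omitted.
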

\begin{proof}
We may, without loss of generality, set $\upsilon=1$, as doing so
will not alter the wavefront sets involved in this proof. With:
\[
\chi\left(\v x,y\right)=\v x-\v{\gamma}\left(y\right).
\]
We observe that:
\[
\mathcal{D}\chi\left(\v x,y\right)=\begin{bmatrix}I_{n} & \gamma'\left(y\right)\end{bmatrix},\quad\mathcal{D}\chi\left(\v x,y\right)^{T}=\begin{bmatrix}I_{n}\\
\gamma'\left(y\right)^{T}
\end{bmatrix},
\]
and so $\ker\mathcal{D}\chi\left(\v x,y\right)^{T}$ is trivial, indicating
that the pullback $w\left(\v x-\v{\gamma}\left(y\right)\right)$ is
indeed well-defined, and: 
\[
WF\left\{ w\left(\v x-\v{\gamma}\left(y\right)\right)\right\} \subseteq\chi^{\star}WF\left(w\right),
\]

Where:
\begin{equation}
\chi^{\star}WF\left(w\right)=\left\{ \left(\left(\v x,y\right),\left(\v{\xi},\gamma'\left(y\right)\cdot\v{\xi}\right)\right):\left(\v x-\v{\gamma}\left(y\right),\v{\xi}\right)\in WF\left(w\right)\right\} .\label{eq:WF1}
\end{equation}
We now wish to show that:
\[
WF\left\{ w\left(\v x-\v{\gamma}\left(y\right)\right)\right\} =\chi^{\star}WF\left(w\right).
\]
Indeed, let $y_{0}>-\varepsilon$, and define:
\[
\psi_{y_{0}}\left(\v x\right)=\begin{bmatrix}\v x+\gamma\left(y_{0}\right)\\
y_{0}
\end{bmatrix}.
\]
Then:
\[
\mathcal{D}_{\psi_{y_{0}}}\left(\v x\right)=\begin{bmatrix}I_{n}\\
0
\end{bmatrix}.\quad\mathcal{D}_{\psi_{y_{0}}}\left(\v x\right)^{T}=\begin{bmatrix}I_{n} & 0\end{bmatrix},
\]
and so:
\[
\ker\mathcal{D}_{\psi_{y_{0}}}\left(\v x\right)^{T}=\left\{ \left(\v 0,\eta\right):\eta\in\mathbb{R}\right\} .
\]
Therefore, the set of normals for $\psi_{y_{0}}$ satisfies:
\[
N_{\psi_{y_{0}}}\subseteq\left(\mathbb{R}^{n}\times\left(-\varepsilon,\infty\right)\right)\times\left\{ \left(\v 0,\eta\right):\eta\in\mathbb{R}\right\} ,
\]
and so $N_{\psi_{y_{0}}}\cap WF\left\{ w\left(\v x-\v{\gamma}\left(y\right)\right)\right\} =\emptyset$.
Thus, the pullback $\psi_{y_{0}}^{\star}\left\{ w\left(\v x-\v{\gamma}\left(y\right)\right)\right\} $
is well-defined, and is in fact equal to $w$. Then:
\begin{equation}
WF\left(w\right)=WF\left(\psi_{y_{0}}^{\star}\left\{ w\left(\v x-\v{\gamma}\left(y\right)\right)\right\} \right)\subseteq\psi_{y_{0}}^{\star}WF\left\{ w\left(\v x-\v{\gamma}\left(y\right)\right)\right\} ,\label{eq:WF2}
\end{equation}
where:
\begin{flalign}
 & \hfill &  & \psi_{y_{0}}^{\star}WF\left\{ w\left(\v x-\v{\gamma}\left(y\right)\right)\right\}  & \hfill\nonumber \\
 &  &  & \hspace{3em}=\left\{ \left(\v x,\v{\xi}\right):\left(\left(\v x+\v{\gamma}\left(y_{0}\right),y_{0}\right),\left(\v{\xi},\eta\right)\right)\in WF\left\{ w\left(\v x-\v{\gamma}\left(y\right)\right)\right\} \right\} .\label{eq:WF2-pullback}
\end{flalign}

Thus, if we chose $\left(\left(\v x,y_{0}\right),\left(\v{\xi},\gamma'\left(y_{0}\right)\cdot\v{\xi}\right)\right)\in\chi^{\star}WF\left(w\right)$,
this choice was based on choosing $\left(\v x-\v{\gamma}\left(y_{0}\right),\v{\xi}\right)\in WF\left(w\right)$.
We then have by \ref{eq:WF2} that $\left(\v x-\v{\gamma}\left(y_{0}\right),\v{\xi}\right)\in\psi_{y_{0}}^{\star}WF\left\{ w\left(\v x-\v{\gamma}\left(y\right)\right)\right\} $.
That is,
\[
\left(\left(\v x,y_{0}\right),\left(\v{\xi},\eta\right)\right)=\left(\left(\v x-\v{\gamma}\left(y_{0}\right)+\v{\gamma}\left(y_{0}\right),y_{0}\right),\left(\v{\xi},\eta\right)\right)\in WF\left\{ w\left(\v x-\v{\gamma}\left(y\right)\right)\right\} ,
\]
for some $\eta$. Of course, one only needs to choose $\eta=\gamma'\left(y_{0}\right)\cdot\v{\xi}$,
and this will yield the desired set inclusion.

Next, we can use \ref{thm:propagation1} to describe $WF\left\{ \int_{0}^{\infty}w\left(\v x-\v{\gamma}\left(y+t\right)\right)\, dt\right\} $
as follows:
\begin{flalign*}
 & \hfill &  & WF\left\{ \int_{0}^{\infty}w\left(\v x-\v{\gamma}\left(y+t\right)\right)\, dt\right\} \backslash WF\left\{ w\left(\v x-\v{\gamma}\left(y\right)\right)\right\}  & \hfill\\
 &  &  & \hspace{3em}\subseteq\bigcup_{\left(\v{\xi},\eta\right)\perp\left(\v 0,-1\right)}\left\{ \left(\left(\v x,y-t\right),\left(\v{\xi},\eta\right)\right):\right.\\
 &  &  & \hspace{8em}\left.\left(\left(\v x,y\right),\left(\v{\xi},\eta\right)\right)\in WF\left\{ w\left(\v x-\v{\gamma}\left(y\right)\right)\right\} ,t\ge0\right\} \\
 &  &  & \hspace{3em}=\bigcup_{\v{\xi}\in\mathbb{R}^{n}}\left\{ \left(\left(\v x,y-t\right),\left(\v{\xi},0\right)\right):\left(\left(\v x,y\right),\left(\v{\xi},0\right)\right)\in\chi^{\star}WF\left(w\right),t\ge0\right\} \\
 &  &  & \hspace{3em}=\bigcup_{\v{\xi}\in\mathbb{R}^{n}}\left\{ \left(\left(\v x,y-t\right),\left(\v{\xi},0\right)\right):\left(\v x-\v{\gamma}\left(y\right),\v{\xi}\right)\in WF\left(w\right),\v{\xi}\perp\v{\gamma}\left(y\right),t\ge0\right\} .
\end{flalign*}
Finally,
\begin{flalign*}
 & \hfill &  & WF\left\{ \left.\int_{0}^{\infty}w\left(\v x-\v{\gamma}\left(y+t\right)\right)\, dt\right|_{y=0}\right\}  & \hfill\\
 &  &  & \hspace{2em}\subseteq\chi_{0}^{\star}WF\left\{ w\left(\v x-\v{\gamma}\left(y\right)\right)\right\} \cup\chi_{0}^{\star}\left\{ \left(\left(\v x,y-t\right),\left(\v{\xi},0\right)\right):\right.\\
 &  &  & \hspace{3em}\left.\xi\in\v{\gamma}\left(y\right)^{\perp},\left(\v x-\v{\gamma}\left(y\right),\v{\xi}\right)\in WF\left(w\right),t\ge0\right\} \\
 &  &  & \hspace{2em}=WF\left(w\right)\cup\\
 &  &  & \hspace{3em}\left\{ \left(\v x,\v{\xi}\right)\,\middle|\,\exists t\ge0:\left(\v x-\v{\gamma}\left(t\right),\v{\xi}\right)\in WF\left(w\right),\v{\xi}\in\v{\gamma}\left(t\right)^{\perp}\right\} . & \qedhere
\end{flalign*}

\end{proof}
It should be noted that the integral \ref{eq:curveintegral} reduces
to the distributional anti-partial derivative developed in the previous
section when $\v{\gamma}$ parametrizes a ray:
\[
\v{\gamma}\left(t\right)=t\v v,\quad t\ge0,\v v\in\mathcal{S}^{n-1}.
\]
Furthermore, the result obtained in \ref{thm:propagation-curve} in
this case is:
\begin{flalign*}
 & \hfill &  & WF\left\{ \int_{0}^{\infty}w\left(\v x-\v{\gamma}\left(t\right)\right)\upsilon\left(t\right)\, dt\right\} \backslash WF\left(w\right) & \hfill\\
 &  &  & \hspace{3em}\subseteq\left\{ \left(\v x,\v{\xi}\right)\,\middle|\,\exists t\ge0:\v{\xi}\perp\gamma'\left(t\right)\And\left(\v x-\v{\gamma}\left(t\right),\v{\xi}\right)\in WF\left(w\right)\right\} \\
 &  &  & \hspace{3em}=\left\{ \left(\v x,\v{\xi}\right)\,\middle|\,\exists t\ge0:\v{\xi}\perp\v v\And\left(\v x-t\v v,\v{\xi}\right)\in WF\left(w\right)\right\} ,
\end{flalign*}
which is an equivalent formulation to \ref{eq:propagation}.

\newpage{}

\bibliographystyle{plain}
\bibliography{/home/caretaker82/Mathematics/bibtexdb}

\end{document}